\theoremstyle{plain}
\newtheorem{theorem}{Theorem}[section]
\newtheorem{lemma}[theorem]{Lemma}
\newtheorem{corollary}[theorem]{Corollary}
\theoremstyle{definition}
\newtheorem{definition}[theorem]{Definition}
\theoremstyle{remark}
\newtheorem{remark}[theorem]{Remark}
\newcommand{\C}{\mathbb{C}}             
\newcommand{\Z}{\mathbb{Z}}             
\newcommand{\Q}{\mathbb{Q}}                 
\newcommand{\cg}[1]{\langle #1 \rangle} 
\renewcommand{\H}{\mathcal{H}}    
\DeclareMathOperator{\End}{End}
\title{A Fixed Point Decomposition of Twisted Equivariant K-Theory}
\author{Tom Dove}\thanks{Tom Dove was supported by a PhD scholarship from the DAAD}
\author{Thomas Schick}
\author{Mario Vel\'{a}squez}\thanks{Part of this work was carried out during a visit of Mario Vel\'asquez in G\"ottingen supported by the DFG RTG "Fourier analysis and spectral theory"}
\address{Mathematisches Institut, Bunsenstrasse 3-5, 37073 G\"{o}ttingen, Germany}
\email{\href{mailto:thomas.dove@mathematik.uni-goettingen.de}{thomas.dove@mathematik.uni-goettingen.de}}
\email{\href{mailto:thomas.schick@math.uni-goettingen.de}{thomas.schick@math.uni-goettingen.de}}
\address{Departamento de Matem\'aticas\\Universidad Nacional de Colombia, sede Bogot\'a\\Cra. 30 cll 45 - Ciudad Universitaria\\ Bogot\'a, Colombia}
\email{\href{mailto:mavelasquezm@gmail.com}{mavelasquezm@gmail.com}}
\date{February 2022}
\begin{document}

\onehalfspacing
\maketitle

\begin{abstract}
We present a decomposition of rational twisted $G$-equivariant K-theory, $G$ a finite group, into cyclic group equivariant K-theory groups of fixed point spaces. This generalises the untwisted decomposition by Atiyah and Segal \cite{AtiyahSegal:decomp} as well as the decomposition by Adem and Ruan for twists coming from group cocycles \cite{AdemRuan:twisted}.
\end{abstract}

\section{Introduction}\label{sec:introduction}

Atiyah and Segal proved that the equivariant K-theory of a compact space $X$ acted on
by a finite group $G$ can, after tensoring with $\C$, be decomposed into the
non-equivariant K-theory of its fixed point spaces \cite{AtiyahSegal:decomp}*{Theorem 2}. That is, there is a natural isomorphism
\[
K_G(X) \otimes \C 
\xrightarrow{\cong}
\Bigl[ \bigoplus_{g \in G} K(X^g) \otimes \C \Bigr]^G.
\]
In this paper, we generalise this to twisted equivariant K-theory for twists
that are classified up to isomorphism by $H^3_G(X;\Z)$. For twists coming only
from the group, that is, with characteristic class in the image of $H^3(G;\Z)=H^3_G(*;\Z) \to H^3_G(X;\Z)$, Atiyah and Segal's decomposition has already been generalised to twisted K-theory by Adem and Ruan \cite{AdemRuan:twisted}*{Theorem 7.4}. Our decomposition map is defined by restriction to the fixed point spaces $X^g$; we have
\[
    K_G(X, P) \otimes \Q 
    \to
    \Bigl[ \bigoplus_{g \in G} K_{\cg g}(X^g, P|_{X^g}) \otimes \Q \Bigr]^G,
\]
where $K_G(X,P)$ denotes the $G$-equivariant K-theory of $X$ twisted by an equivariant principal $PU(\H)$-bundle $P$. Our main theorem is that this is an isomorphism onto a subspace defined by a simple relation between the summands required by naturality of the restriction maps. This is described in detail in Section \ref{sec:decomptheorem}. The twisted (equivariant) K-theory is by definition the K-theory of an appropriate non-commutative $C^*$-algebra, as explained in Section \ref{sec:def_twisted_K}.

Our result has to be distinguished from the Atiyah-Segal \emph{completion} theorem which describes equivariant K-theory completed at the augmentation ideal as the (representable) non-equivariant K-theory of the Borel construction. This has been generalized to K-theory of $C^*$-algebras and in particular to twisted equivariant K-theory in \cite{Phillips}. Some of our decomposition results also hold for the equivariant K-theory of an arbitrary $G$-$C^*$-algebra; these are explained in Section \ref{sec:rest-ind}  and might be useful in this generality.

The interest in twisted K-theory in mathematics is largely a result of an inflow of ideas from physics over the past few decades. It was defined early in the days of K-theory; initially for torsion twists by Donovan-Karoubi \cite{DonovanKaroubi} and then later for general twists by Rosenberg \cite{Rosenberg:TKT}. Twisted K-theory is of interest to physicists due to its applications in string theory, in particular as the home of D-brane charges in the presence of B-fields \cite{Witten:DbranesKtheory}. In this context, it also shows up naturally when describing T-duality \cites{BEM:Tduality,BunkeSchick}. The significance of twisted equivariant K-theory in particular is highlighted by the results of Freed, Hopkins, and Teleman \cite{FHT1}, who prove a close relationship between the twisted equivariant K-theory of a compact Lie group with its conjugation action and the Verlinde algebra of its loop group. 

We hope that our Atiyah-Segal type decomposition theorem can shed some light on T-duality transforms in equivariant K-theory.

\section{Twisted K-Theory}\label{sec:twistedktheory}
\label{sec:def_twisted_K}

Here we establish the definition of twisted equivariant K-theory used in this paper. Let $X$ be a compact space acted on by a finite group $G$ and let $P \to X$ be a stable $G$-equivariant principal $PU(\H)$-bundle. For simplicity, we call these bundles $G$-equivariant twists. Stable equivariant projective unitary bundles are defined, for instance, in \cite{BEJU:universaltwist}*{Def 2.2}. The $P$-twisted $G$-equivariant K-theory of $X$ is defined as the $G$-equivariant K-theory of the $C^*$-algebra of continuous sections of the associated bundle of compact operators.

\begin{definition}
    \[
    K^*_G(X,P) := K^G_* \bigl( \Gamma(P \times_{PU(\H)} \mathcal{K}) \bigr)
    \]
\end{definition}

This definition is motivated by Rosenberg's definition of twisted K-theory \cite{Rosenberg:TKT}*{\textsection 2}. The equivariant version appears in, for instance, \cite{Karoubi:oldandnew}*{\textsection 5.4} and \cite{Meinrenken}*{\textsection 2.3}.

There are of course other formulations of twisted equivariant K-theory; for example, via equivariant sections of the bundle of Fredholm operators associated to $P$ \cite{AtiyahSegal:TKT}*{\textsection 7}. The formulation we use allows us to work with twisted equivariant K-theory using techniques from non-commutative geometry. Indeed, by the Green-Julg theorem, twisted equivariant K-theory simply becomes the ordinary K-theory of some $C^*$-algebra, namely the crossed product of $G$ with the above algebra of sections. For this reason, in Section \ref{sec:rest-ind} and Section \ref{sec:K>} we will work with the equivariant K-theory of $C^*$-algebras and prove decomposition results that hold in this general context.

\section{The Decomposition Theorem}\label{sec:decomptheorem}

Let us formulate the decomposition theorem in detail. For each $g \in G$, the inclusion $X^g \to X$ of the fixed point set $X^g = \{ x \in X \mid g \cdot x = x \}$ induces a map
\[
    K_{G}(X, P) \to K_{\cg{g}}(X^g, P_g)
\] 
by first restricting the group from $G$ to $\cg g$ and then restricting to $X^g$. Here, $P_g$ is the restriction of $P$ to $X^g$. Applying this to each $g \in G$ gives 
\begin{equation}\label{eq:rest_to_fixpts}
    K_G(X, P) \to \bigoplus_{g \in G} K_{\cg{g}}(X^g, P_g).
\end{equation}
The inclusion of cyclic subgroups $\cg{h} \subset \cg{g}$ induces the following commutative diagram:
\[
\begin{tikzcd}
    K_G(X, P)  \arrow[r] \arrow[d] &
    K_{\cg{g}}(X^g, P_g)  \arrow[d] \\
    K_{\cg h}(X^{h}, P_{h}) \arrow[r] &
    K_{\cg{h}}(X^g, P_g).
\end{tikzcd}
\]
Thus there is a relation in the image of \eqref{eq:rest_to_fixpts}: for each $g,h \in G$ with $\cg h \subseteq \cg g$, the factors in the $g$- and $h$-summands map to the same element in $K_{\cg h}(X^g, P_g)$.

A $G$-action on the right-hand side of \eqref{eq:rest_to_fixpts} is defined as follows. The action of $k \in G$ induces a homeomorphism $k \colon X^g \to X^{kgk^{-1}}$. We then obtain the composition of isomorphisms
\begin{equation}\label{eq:twist_action}
    K_{\cg{kgk^{-1}}}(X^{kgk^{-1}}, P_{kgk^{-1}})
    \xrightarrow{\,\, \cong \,\,}
    K_{\cg g}(X^g, k^*P_{kgk^{-1}}) 
    \xrightarrow{\,\, \cong \,\,}
    K_{\cg g}(X^g, P_g).
\end{equation}
The second isomorphism is obtained by the canonical identification $k^*P_{kgk^{-1}} \cong P_g$ given by the action of $k$ on $P$. In this way, we get an action of $G$ on the direct sum of all the $K_{\cg g}(X^g, P_g)$. Moreover, the image of \eqref{eq:rest_to_fixpts} takes values in the $G$-invariants due to the following commutative diagram:
\[
\begin{tikzcd} 
    K_G(X, P)  \arrow[r] \arrow[d, "k", swap] &
    K_{\cg{kgk^{-1}}}(X^{kgk^{-1}}, P_{kgk^{-1}}) \arrow[d, "k"] \\
    K_G(X,P) \arrow[r] &
    K_{\cg g}(X^g, P_g)  
\end{tikzcd}
\]
Our main theorem states that, after tensoring with the rationals, these two
conditions on the image of \eqref{eq:rest_to_fixpts} are the right ones to produce an isomorphism. From now on we write $(-)_\Q = (-) \otimes \Q$, and similarly for $\C$.

\begin{theorem}
Let $G$ be a finite group, $X$  a finite $G$-CW-complex, and $P$ a $G$-equivariant twist on $X$. Then, there is an isomorphism
    \[
    K_G(X, P)_\Q 
    \xrightarrow{\cong} CSC_G(X,P)\subseteq
    \Bigl[ \bigoplus_{g \in G} K_{\cg g}(X^g, P_g)_\Q \Bigr]^G
    \]
onto the subspace $CSC_G(X,P)$ defined by the following relation:
\begin{center}
    If $g,h \in G$ and $\cg h \subseteq \cg g$, then the $g$-summand and the $h$-summand map to the same element in $K_{\cg h}(X^g, P_g)_\Q$.
\end{center}
We shall call this the cyclic subgroup compatibility condition.
\end{theorem}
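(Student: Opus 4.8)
The restriction map is shown above to take values in $CSC_G(X,P)$, so what remains is to prove that it is injective and maps onto this subspace. The plan is to follow the template of Atiyah--Segal \cite{AtiyahSegal:decomp} and Adem--Ruan \cite{AdemRuan:twisted}: view both sides as functors of the $G$-CW pair (with $P$ fixed and pulled back along inclusions), reduce to a single orbit by a Mayer--Vietoris induction, and then compute both sides directly in terms of twisted representation theory.

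\textbf{Reduction to orbits.} The functor $K_G(-,P)_\Q$ is a $\Q$-linear $G$-equivariant cohomology theory: it is the K-theory of the $C^*$-algebra of sections of an associated bundle, restriction to a closed invariant subcomplex is surjective on sections, so excision and Mayer--Vietoris hold. On the right-hand side, each $X\mapsto K_{\cg g}(X^g,P_g)_\Q$ is a cohomology theory in $X$ because $X\mapsto X^g$ sends $G$-cofiber sequences to $\cg g$-cofiber sequences; the direct sum over $g\in G$ is again one, and since $\Q[G]$ is semisimple the functor $(-)^G$ is exact, so $\bigl[\bigoplus_{g}K_{\cg g}((-)^g,P_g)_\Q\bigr]^G$ is a $\Q$-linear cohomology theory, and the natural transformation measuring the failure of the cyclic subgroup compatibility condition goes into a further such theory. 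The restriction map is a natural transformation, so by the usual five-lemma induction over the cells of a $G$-CW structure --- phrased, as in the untwisted case, in terms of the ambient invariants $\bigl[\bigoplus_{g}K_{\cg g}((-)^g,P_g)_\Q\bigr]^G$ and the compatibility-defect transformation, rather than directly in terms of the subspace $CSC_G(-,P)$ --- it suffices to prove the statement when $X=G/H$ for a subgroup $H\le G$.

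\textbf{The orbit computation.} For $X=G/H$, change of groups identifies the left-hand side with $K_H(\mathrm{pt},\tau)_\Q$, where $\tau\in H^3(H;\Z)\cong H^2(H;U(1))$ is the group cocycle twist obtained by restricting $P$; this is the rationalised $\tau$-twisted (projective) representation ring $R_\tau(H)_\Q$. On the right-hand side, $(G/H)^g=\{kH: k^{-1}gk\in H\}$ is empty unless $g$ is conjugate into $H$, so up to the $G$-action we may assume $g\in H$; then every element of $\cg g$ fixes every point of $(G/H)^g$, and $P_g$ restricted to this finite discrete set is $\cg g$-equivariantly trivial since $H^3_{\cg g}(\mathrm{pt};\Z)=H^3(B\cg g;\Z)=0$. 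Hence the $g$-summand is a sum of copies of $R(\cg g)_\Q$ indexed by $(G/H)^g$. Passing to $G$-invariants cuts this down over conjugacy, and the action \eqref{eq:twist_action} of $Z_G(g)$ on the $g$-summand is by a character of $Z_G(g)$ determined by $\tau$, so its invariant part vanishes unless $g$ is $\tau$-regular; for $\tau$-regular $g$ the cyclic subgroup compatibility condition then records precisely the relations among the restrictions of twisted characters of $H$ to cyclic subgroups.

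\textbf{Matching the two sides, and the main obstacle.} What is left is to check that restriction of twisted characters induces an isomorphism from $R_\tau(H)_\Q$ onto the cyclic-subgroup-compatible $G$-invariant subspace just described. Over $\C$ both sides are free on the set of $\tau$-regular conjugacy classes of $H$, compatibly with restriction, so the content is a twisted form of Artin's induction theorem: rationally $R_\tau(H)$ is detected by its restrictions to cyclic subgroups (on each of which $\tau$ becomes trivial), and the image of those restriction maps is exactly the cyclic-subgroup-compatible subspace. I expect this twisted Artin statement --- equivalently, pinning down the image of the sum of inductions $\bigoplus_{\cg g}R(\cg g)\to R_\tau(H)$ and the kernel of the product of restrictions after $\otimes\,\Q$ --- to be the main obstacle, together with keeping the twist identifications of \eqref{eq:twist_action} coherent under the $G$-action throughout the induction. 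An alternative, presumably closer to how the paper proceeds given the results announced for $G$-$C^*$-algebras, is to pass via the Green--Julg theorem to the K-theory of the crossed product, decompose it using the restriction and induction formalism of Section~\ref{sec:rest-ind} into induced ``top pieces'' in the sense of Section~\ref{sec:K>}, discard the non-cyclic summands by the generalised Artin induction theorem, and match the reassembled sum --- indexed by conjugacy classes of cyclic subgroups rather than by all $g\in G$ --- with the right-hand side, the redundancy being accounted for by $G$-invariance and the cyclic subgroup compatibility condition.
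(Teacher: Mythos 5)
There is a genuine gap in your reduction-to-orbits step. The target of the decomposition map is not the cohomology theory $F(X)=\bigl[\bigoplus_{g}K_{\cg g}(X^g,P_g)_\Q\bigr]^G$ but the subspace $CSC_G(X,P)=\ker\bigl(d\colon F(X)\to D(X)\bigr)$ cut out by the compatibility defect $d$. A kernel of a natural transformation of cohomology theories is only left exact: in the Mayer--Vietoris ladder for $F$ and $D$ the kernels form a complex, but there is no five lemma that makes that complex exact, and so no ``usual five-lemma induction'' applies to $X\mapsto CSC_G(X,P)$. Your parenthetical fix --- running the induction ``in terms of the ambient invariants and the compatibility-defect transformation'' --- does not repair this: exactness of $0\to K_G(-,P)_\Q\to F\to D$ at the middle spot is exactly the statement that compatible elements in kernels lift to compatible elements, and that is not a formal consequence of knowing it on orbits. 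The appeal to ``the untwisted case'' is also misleading: in Atiyah--Segal the target $\bigl[\bigoplus_g K(X^g)_\C\bigr]^G$ is the \emph{full} invariant subspace of a cohomology theory, with no kernel condition, so the issue never arises there; and for a general twist $P\in H^3_G(X;\Z)$ one cannot trade $CSC_G$ for an Adem--Ruan-style unconstrained target, because the identification $K_{\cg g}(X^g,P_g)\cong K(X^g)\otimes R_{\operatorname{res}}(\cg g)$ and the ``evaluate at $g$'' map require the twist to come from a group cocycle, which fails for general $P$ away from orbits.

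Closing this gap is the actual content of the paper's proof: one shows directly that a cyclic-subgroup-compatible element in the kernel of a Mayer--Vietoris map admits a cyclic-subgroup-compatible lift, by inducting over the poset of cyclic subgroups $\cg h\subseteq\cg g$ and using, for each $g$, a natural split short exact sequence $0\to\bigoplus_{h<g}K^>_{\cg h}(X^g,P_g)^{\cg g}_\Q\to K_{\cg g}(X^g,P_g)_\Q\to K^>_{\cg g}(X^g,P_g)_\Q\to 0$ built from the restriction/induction maps $r_H$, $i_H$ of Section~\ref{sec:rest-ind} and the ``subgroup-independent'' summands $K^>$ of Section~\ref{sec:K>}; the already-chosen lifts for $h<g$ determine the left-hand piece and Lemma~\ref{lemma:liftto>} supplies the $K^>$ piece. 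Your final ``alternative'' paragraph correctly guesses that Sections~\ref{sec:rest-ind} and~\ref{sec:K>} are the relevant machinery, but it deploys them to decompose $K_G(X,P)$ itself rather than to establish the Mayer--Vietoris property of $CSC_G$, which is where they are actually needed. Your orbit computation is essentially the paper's (which simply invokes the comparison with Adem--Ruan from Section~\ref{subsec:AdemRuan}), and the ``twisted Artin induction'' content you flag is indeed absorbed there; but as written the global induction does not go through.
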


\begin{remark}
The subspace $CSC_G(X,P)$ can be described as a limit over all the spaces $K_{\cg h}(X^g, P_g)_\Q$ with $\cg h \subseteq \cg g$ and arrows coming from the cyclic subgroup compatibility relation.
\end{remark}

The theorem is proved in the standard way. First, we must show that the theorem holds for homogeneous spaces $G/H$; for this, we will compare our formulation to Adem and Ruan's. Then we want to use Mayer-Vietoris to do induction on the $G$-cells of $X$. The difficulty is showing that, after imposing the cyclic subgroup compatibility condition, the Mayer-Vietoris sequence is still exact. The full proof is in Section \ref{sec:decompproof}.

\subsection{Example: Klein four group acting on a point}

Let $G = \Z_2 \times \Z_2$ act trivially on a point. In this case, since $H^3(\Z_2 \times \Z_2;\Z) \cong \Z_2$, there is one non-trivial twist up to isomorphism. Let $\tau$ be a non-trivial twist; the restriction $\tau_g$ is isomorphic to the trivial twist for all $g \in G$. We have one summand for each element of $G$, and each of these summands is preserved by the $G$-action because the group is abelian. In the following, all K-theory groups are concentrated in degree $0$. For each non-identity element of $G$, we have a copy of $\Q \oplus \Q$ in  $\bigoplus_{g \in G} K_{\cg g}(*, \tau_g)_\Q$ because $K_{\cg g}(*, \tau_g)_\Q \cong \Q \oplus \Q$ when $g$ has order 2. 
\[
\begin{array}{|c|c|c|c|c|}
    \hline
    g   & (0,0) & (1,0) & (0,1) & (1,1) \\
    \hline
    K_{\cg g}(*, \tau_g)_\Q & \Q & \Q \oplus \Q & \Q \oplus \Q & \Q \oplus \Q \\
    \hline
\end{array}
\]

The action of $g \in G \setminus \{e\}$ on $K_{\cg g}(*, \tau_g)_\Q$ is non-trivial even though the induced map $g^*$ on the space is the identity and $\tau_g$ is isomorphic to the trivial twist. The action is described in \eqref{eq:twist_action}; we get
\[
    K_{\cg g}(*, \tau_g) 
    \xrightarrow{\,\, g^* \,\,}
    K_{\cg g}(*, \tau_g) 
    \xrightarrow{\,\, \cdot g^{-1} \,\,}
    K_{\cg g}(*, \tau_g).
\]
The first map is the identity. The second map is induced by a twist automorphism coming from multiplication by $g^{-1} = g$. This automorphism is the one corresponding to the non-trivial element of $H^2(\Z_2;\Z) \cong \Z_2$. This exchanges the two factors in $K_{\cg g}(*, \tau_g)_\Q \cong \Q \oplus \Q$. Therefore the $G$-invariant subspace of each $\Q \oplus \Q$ is precisely the diagonal $\Delta \Q \subset \Q \oplus \Q$. 

For each $g \in G$, we have that $\cg e \subset g$, and so the cyclic subgroup relation implies that for each of the $(\Q \oplus \Q)$-summands, one of the factors is determined by the $e$-summand $K(*)_\Q \cong \Q$. Together with the $G$-invariance, we conclude that all the summands are determined by the $e$-summand; hence, $K_G(*,\tau)_\Q \cong \Q$. This example also appears in \cite{AdemRuan:twisted}*{Example 7.8}.

\subsection{Example: \texorpdfstring{$D_8$ acting trivially on $S^1$}{D8 acting trivially on S1}}

We calculate the twisted equivariant $K^0$-groups of $S^1$ with trivial $D_8$ action, where $D_8 = \langle r,s \mid r^4 = s^2 = e, srs = r^3 \rangle $ is the dihedral group of order 8. The twists are classified by 
\begin{align*}
    H^3_{D_8}(S^1;\Z) 
    &\cong
    H^3(S^1 \times BD_8;\Z)  \\
    &\cong
    H^3(D_8;\Z) \oplus H^2(D_8;\Z) \\
    &\cong
    \Z_2 \oplus (\Z_2 \oplus \Z_2).
\end{align*}
We only consider the twists coming from $H^2(D_8;\Z) \cong \Z_2 \oplus \Z_2$. One can use Adem and Ruan's formula for twists coming from $H^3(D_8;\Z)$. Using the isomorphism $H^2(D_8;\Z) \cong H^1(D_8;S^1)$, the twists we consider are induced from 1-cocycles $D_8 \to S^1$. Such maps must factor through the abelianisation $D_8 / \cg{r^2} \cong \Z_2 \times \Z_2$. The following describes the four possibilites, denoted $\tau_1, \tau_2, \tau_3$, and $\tau_4$:
\[
\begin{array}{|c|c|c|c|c|}
\hline
    & 1,r^2 &  r, r^3   & s, r^2s   & rs, r^3s \\
\hline
\tau_1  & 1 & 1 & 1 & 1 \\
\tau_2 & 1 & -1 & 1 & -1 \\
\tau_3 & 1 & 1 & -1 & -1 \\
\tau_4 & 1 & -1 & -1 & 1 \\
\hline
\end{array}
\]
The restrictions to the cyclic subgroups of $D_8$ can be read from the table; a twist restricts to a non-trivial twist whenever there is a $-1$. Since $H^2(\Z_2;\Z) \cong \Z_2$ there is, up to isomorphism, only one non-trivial twist on the subgroups of order 2. The non-trivial twist on the $r$-summand corresponds to the order 2 element of $H^2(\Z_4;\Z) \cong \Z_4$. Using a Mayer-Vietoris argument, one can compute that
\[
K^0_{\Z_2}(S^1, \xi_1)_\Q \cong \Q
\quad \text{and} \quad
K^0_{\Z_4}(S^1, \xi_2)_\Q \cong \Q^2,
\]
where $\xi_1$ and $\xi_2$ are these non-trivial $\Z_2$- and $\Z_4$-equivariant twists, respectively. The argument is the same as the computation in \cite{FHT1}*{Example 1.6}. Here, one can actually compute the integral twisted equivariant K-theory, not just the rationalization. The untwisted K-theory is $K^0_{\Z_2}(S^1) \cong R(\Z_2)$ and $K^0_{\Z_4}(S^1) \cong R(\Z_4)$, the complex representation rings.

Let $\tau$ be one of the twists $\tau_1,\tau_2, \tau_3$, or $\tau_4$. We want to determine
\[
    CSC_{D_8}(S^1, \tau) 
    \subseteq
    \Bigl[ \bigoplus_{g \in D_8} K^0_{\cg g}(S^1, \tau_g)_\Q \Bigr]^{D_8}
\]
We summarise the contribution of each of the group elements:
\[
\begin{array}{|c|c|c|c|c|c|}
    \hline
 & e & r^2 & r & s & rs \\
\hline 
 \tau_1 & \Q & \Q^2 & \Q^4 & \Q^2 & \Q^2 \\ 
 \tau_2 & \Q & \Q^2 & \Q^2 & \Q^2 & \Q \\
 \tau_3 & \Q & \Q^2 & \Q^4 & \Q  & \Q \\
 \tau_4 & \Q & \Q^2 & \Q^2 & \Q & \Q^2 \\
\hline
\end{array}
\]
Note that we only need to write one element of each conjugacy class, as summands corresponding to conjugate elements are identified by the group action. 

To see the conditions imposed by the cyclic subgroup condition, we investigate the relations between the cyclic subgroups. These are shown in the following diagram:
\begin{center}
\begin{tikzpicture}
    \node (e) at (0,0) {$\cg e$};
    \node (s) at (-1.8,1) {$\cg s$};
    \node (rs) at (-0.6,1) {$\cg{rs}$};
    \node (r2s) at (0.6,1) {$\cg{r^2s}$};
    \node (r3s) at (1.8,1) {$\cg{r^3s}$};
    \node (r2) at (2,0) {$\cg{r^2}$};
    \node (r) at (3.5,0.5) {$\cg r$};
    \node (r3) at (3.5,-0.5) {$\cg{r^3}$};
    
    \draw [right hook->] (e) -> (s);
    \draw [right hook->] (e) -> (rs);
    \draw [right hook->] (e) -> (r2s);
    \draw [right hook->] (e) -> (r3s);
    \draw [right hook->] (e) -> (r2);
    \draw [right hook->] (r2) -> (r);
    \draw [right hook->] (r2) -> (r3);  
    
    \draw ([xshift=-1pt] r.south) -- ([xshift=-1pt] r3.north);
    \draw ([xshift=1pt] r.south) -- ([xshift=1pt] r3.north);
\end{tikzpicture}
\end{center}
One can check that the maps induced by restriction to subgroups are all injective. The trivial subgroup is a subgroup of every cyclic group, so the $e$-summand determines one factor of each of the other summands. We also have $\cg{r^2} \subset \cg{r}$, so two factors of the $r$-summand are determined by the $r^2$-summand. The final relation is that $\cg{r} = \cg{r^3}$. This means that the $\cg{r}$- and $\cg{r^3}$-summands are equal. The elements $r$ and $r^3$ are also related by conjugation by $s$, and when the twist $\tau_r$ is trivial this action induces an automorphism on $K^0_{\cg r}(S^1, \tau_r) \cong \Z_4$ that swaps the two order 4 elements. The result is that the invariant subspace of $K^0_{\cg r}(S^1, \tau)_\Q \cong \Q^4$ is of rank 3. We can now compute the rational twisted equivariant K-theory by counting dimensions:
\[
\begin{array}{|c|c|}
\hline 
\tau & K^0_{D_8}(S^1, \tau)_\Q \\
\hline 
\tau_1 & \Q^5 \\
\tau_2 & \Q^3 \\
\tau_3 & \Q^3 \\
\tau_4 & \Q^3 \\
\hline
\end{array}
\]
As a sanity check, we know that $K^0_{D_8}(S^1, \tau_1) \cong R(D_8)$, which is of rank $5$ because $D_8$ has 5 conjugacy classes. We remark that this example can be computed integrally using the Mayer-Vietoris technique in \cite{FHT1}*{Example 1.6}; our discussion serves as a demonstration of the decomposition theorem.

\subsection{Comparison with Atiyah-Segal}

Before proceeding, we explain how our result is a generalisation of the decompositions of Atiyah-Segal and Adem-Ruan. The Atiyah-Segal map,
\[
K_G(X)_\C \to \Bigl[ \bigoplus_{g \in G} K(X^g)_\C \Bigr]^G,
\]
is defined as a direct sum of maps
\[
K_G(X)_\C 
\to
K_{\cg g}(X^g)_\C 
\cong
K(X^g) \otimes R(\cg g)_\C
\to
K(X^g)_\C,
\]
where the isomorphism is because $\cg g$ acts trivially on $X^g$ and the final map is induced by sending a character $\phi$ to $\phi(g)$. The resulting map factors through the map in our decomposition theorem (using $\C$ instead of $\Q$),
\begin{equation}\label{eqn:atiyahsegalcomp}
K_G(X)_\C 
\to \Bigl[ \bigoplus_{g \in G} K_{\cg g}(X^g)_\C \Bigr]^G
\to \Bigl[ \bigoplus_{g \in G} K(X^g)_\C \Bigr]^G.
\end{equation}
Given an element in the right-most space, we recover the $g$-summand in the middle space as follows. For each $k\in \{0,\dotsc, |g|-1\}$, the element in the $g^k$-summand of the Atiyah-Segal space is a sum $F_k = \sum_{i} E_i \xi^{ik}$, where $E$ is a $G$-vector bundle on $X$, $E_i$ is the $g^i$-isotopic component of $E|_{X^g}$, and $\xi$ is the $|g|$th root of unity. By appropriately weighting each term by powers of $\xi$, we can recover each $E_i$ by adding together the $F_k$. This is essentially an inverse discrete Fourier transform. The $g$-summand in the middle space is then just $\sum_{i} E_i \otimes \chi_i$, where $\chi_i$ is the representation $\chi_i(g) = \xi^i$. Repeating this for every summand gives us a split of the second map in \eqref{eqn:atiyahsegalcomp}. The image is precisely the cyclic subgroup compatible elements, giving us an isomorphism between our decomposition and that of Atiyah and Segal. If we use twisted characters, then we can similarly recover the decomposition theorem of Adem and Ruan, as explained in detail in the next subsection. Note that, to use the Fourier decomposition of Atiyah-Segal or Adem-Ruan, one is forced to work with $\C$ instead of $\Q$.

\subsection{Comparison with Adem-Ruan}\label{subsec:AdemRuan}

For a twist $P$ classified by an element in the image of $H^3(G;\Z) \to H^3_G(X;\Z)$, that is, represented by a $\C^*$-valued group 2-cocycle $\alpha$, we are in the context of Adem and Ruan's paper \cite{AdemRuan:twisted}. Their decomposition also factors through ours:
\begin{equation}\label{eqn:ademruandecomp}
    K_G(X, P)_\C 
    \to \Bigl[ \bigoplus_{g \in G} K_{\cg g}(X^g, P_g)_\C \Bigr]^G
    \to \bigoplus_{[g]} \bigl[ K(X^g, P_g) \otimes L_g \bigr]^{C_g}.
\end{equation}
Here $L_g$ is a one-dimensional representation of the centraliser $C_g$ of $g$ defined by the map $h \mapsto \alpha(h,g)\alpha(g,h)^{-1}$. The final direct sum is defined over the conjugacy classes of $G$; a representative of each conjugacy class is chosen. The second map is given by the following composition:
\[
    K_{\cg g}(X^g, P_g)_\C 
    \cong K(X^g) \otimes R_{\operatorname{res}(\alpha)}(\cg g)_\C 
    \to K(X^g) \otimes L_g.
\]
To explain:
\begin{itemize}
    \item $R_{\operatorname{res}(\alpha)}(\cg g)$ is the ring of $\operatorname{res}(\alpha)$-twisted characters of $\cg g$, where $\operatorname{res}(\alpha)$ is the restriction of $\alpha$ to $\cg g$.
    \item The isomorphism exists because $\cg g$ acts trivially on $X^g$ and the twist comes only from the group; see \cite{AdemRuan:twisted}*{Lemma 7.3}.
    \item The second map is given by evaluating twisted characters at $g$, that is, $\chi \mapsto \chi(g)$.
\end{itemize}
A splitting of the second map in \eqref{eqn:ademruandecomp} can be constructed in the same way as in the Atiyah-Segal case, except $\alpha$-twisted characters are used. Since $H^3(\cg g;\Z) = 0$, we know that $\operatorname{res}(\alpha) = \delta \beta$ where $\delta \beta$ is the boundary map of cocycles applied to a 1-cochain $\beta \colon \cg g \to \C^*$. Every $\alpha$-twisted character is of the form $\beta \cdot \chi$ for $\chi$ an untwisted character of $\cg g$. One now performs the same calculation as the previous section, inserting $\beta$ in the relevant places. We apply the group action to get the summands corresponding to elements that aren't one of the chosen conjugacy class representatives.

\section{Restriction and Induction Maps}\label{sec:rest-ind}

For this section, let $G$ be a finite abelian group and $A$ a $G$-$C^*$-algebra.  We emphasize that this $G$ is not the same $G$ as in the decomposition theorem; rather, it will be one of the finite cyclic groups $\cg g$. In our discussion there will be two important maps. Let $H \subseteq G$ be a subgroup. The two maps, to be defined, are
\[
i_H \colon K^H(A) \to K^G(A)
\quad \text{and} \quad
r_H \colon K^G(A) \to K^H(A).
\]
The first is simple to define. By the Green-Julg theorem, we can identify equivariant K-theory with the K-theory of crossed products. The map $K(A \rtimes H) \to K(A \rtimes G)$ corresponding to $i_H$ is induced by the inclusion of $A \rtimes H$ into $A \rtimes G$.

The restriction map $r_H$ is defined as follows. First, suppose that $A$ is unital. Then $A \rtimes G$ is also unital and there is a canonical isomorphism $A \rtimes G = \End_{A \rtimes G}(A \rtimes G)$, where $A \rtimes G$ is viewed as a left-$(A \rtimes G)$-module. We have the following maps:
\[
A \rtimes G 
= \End_{A \rtimes G}(A \rtimes G)
\hookrightarrow \End_{A \rtimes H}(A \rtimes G)
\leftarrow A \rtimes H
\]
The final map sends $f_0 \in A \rtimes H$ to the $(A \rtimes H)$-module map 
\[
\sum_{g\in G} a_g g 
\longmapsto 
\Bigl( \sum_{h \in H} a_h h \Bigr) \cdot f_0.
\]
This function is isomorphic to the standard block inclusion $A\rtimes H\hookrightarrow M_{[G:H]}(A\rtimes H)$ after identifying the matrix algebra with $\End_{A \rtimes H}(A \rtimes G)$, and hence induces an isomorphism on K-theory. Its inverse allows us to define $r_H$ as the composition
\[
r_H \colon 
K \bigl(A \rtimes G \bigr)
= K \bigl(\End_{A \rtimes G}(A \rtimes G) \bigr)
\to K \bigl( \End_{A \rtimes H}(A \rtimes G) \bigr)
\rightarrow 
K \bigl(A \rtimes H \bigr).
\]
When $A$ is non-unital, one defines the map for the unitalisation $A_+ = A \oplus \C$ and then restricts to the K-theory of $A$.

Since $H$ is normal in $G$, there is a $G$-action on $A \rtimes G$ and $A \rtimes H$ given by conjugation by $G$ considered as unitary elements in $A\rtimes G$ . Indeed, on $A\rtimes G$ this is the inner action. Hence the induced action on $K^G(A) = K(A \rtimes G)$ is trivial, but the action on $K^H(A) = K(A \rtimes H)$ may not be. For $x \in A \rtimes H$ and $g \in G$ we write $x^g$ for the element $x$ acted on by $g$. The maps used to define $r_H$ are all $G$-equivariant (after defining suitable actions on the endomorphism algebras) and $K^G(A)^G = K^G(A)$, so $r_H$ takes values in $K^H(A)^G$, the $G$-invariant component of $K^H(A)$.

\begin{lemma}\label{lemma:functoriality}
$i_H$ and $r_H$ satisfy the following properties:

\begin{enumerate}[label=(\alph*)]
    \item If $H_1 \subseteq H_2 \subseteq G$, then the following commute:
        \[
        \begin{tikzcd}
            K^{H_1}(A) \arrow[r, "{i_{H_1}}"] \arrow[rd, "{i_{H_1}}", swap]
            & K^{H_2}(A) \arrow[d, "{i_{H_2}}"] \\
            & K^G(A)
        \end{tikzcd}
        \qquad
        \begin{tikzcd}
            K^G(A) \arrow[r, "{r_{H_2}}"] \arrow[rd, "{r_{H_1}}", swap]
            & K^{H_2}(A) \arrow[d, "{r_{H_2}}"] \\
            & K^{H_1}(A)
        \end{tikzcd}
        \]
    
    \item If $H \subseteq G$ and $f \colon A \to B$ is a $G$-equivariant morphism, then the following diagrams commute:
        \[
        \begin{tikzcd}
            K^H(A) \arrow[r, "i_H"] \arrow[d, "f_*", swap] 
            & K^G(A) \arrow[d, "f_*"] \\
            K^H(B) \arrow[r, "i_H"] 
            & K^G(B)
        \end{tikzcd}
        \qquad
        \begin{tikzcd}
            K^G(A) \arrow[r, "r_H"] \arrow[d, "f_*", swap] 
            & K^H(A) \arrow[d, "f_*"] \\
            K^G(B) \arrow[r, "r_H"] 
            & K^H(B)
        \end{tikzcd}
        \]
    \item If $0 \to J \to A \to A/J \to 0$ is an exact sequence of $G$-$C^*$-algebras, then $i_H$ and $r_H$ commute with the index maps $\partial \colon K^*(A/J) \to K^{*-1}(J)$, that is, the following commute:
        \[
        \begin{tikzcd}
            K^H_*(A/J) \arrow[r, "i_H"] \arrow[d, "\partial", swap] 
            & K^G_*(A/J) \arrow[d, "\partial"] \\
            K^H_{*-1}(J) \arrow[r, "i_H"] 
            & K^G_{*-1}(J)
        \end{tikzcd}
        \qquad
        \begin{tikzcd}
            K^G_*(A/J) \arrow[r, "r_H"] \arrow[d, "\partial", swap] 
            & K^H_*(A/J) \arrow[d, "\partial"] \\
            K^G_{*-1}(J) \arrow[r, "r_H"] 
            & K^H_{*-1}(J)
        \end{tikzcd}
        \]
    As a consequence, the K-theory long exact sequence and Mayer-Vietoris sequence are natural with respect to the maps $i_H$ and $r_H$.
\end{enumerate}

\end{lemma}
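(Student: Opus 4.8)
The plan is to treat the two maps separately. The map $i_H$ is just the map on K-theory induced by the $*$-homomorphism $\iota_H^G\colon A\rtimes H\hookrightarrow A\rtimes G$, so all three properties are essentially formal for it: (a) holds because these inclusions compose, $\iota^G_{H_2}\circ\iota^{H_2}_{H_1}=\iota^G_{H_1}$; (b) holds because a $G$-equivariant $f\colon A\to B$ induces $f\rtimes H$ and $f\rtimes G$ intertwining the $\iota_H^G$, so $f_*$ commutes with $i_H$; and (c) holds because applying $(-)\rtimes H$ and $(-)\rtimes G$ to $0\to J\to A\to A/J\to 0$ — both functors are exact, $G$ being finite — gives a commutative ladder of short exact sequences of $C^*$-algebras whose vertical maps are the inclusions $\iota_H^G$, so the square in (c) for $i_H$ follows from the naturality of the index and exponential maps in the six-term exact sequence with respect to morphisms of short exact sequences.

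For $r_H$ the starting point is that choosing coset representatives $\{t_1=e,t_2,\dots,t_n\}$ for $H$ in $G$ ($n=[G:H]$) exhibits $A\rtimes G$ as the free $A\rtimes H$-module $\bigoplus_i(A\rtimes H)t_i$, and hence identifies $\End_{A\rtimes H}(A\rtimes G)$ with $M_n(A\rtimes H)$ (when $A$ is non-unital one passes to the unitalisation, as in the definition). Under this identification the inclusion $A\rtimes G=\End_{A\rtimes G}(A\rtimes G)\hookrightarrow\End_{A\rtimes H}(A\rtimes G)$ becomes the $*$-homomorphism $\varphi$ sending $b$ to the matrix $(c_{ij})$ with $t_ib=\sum_j c_{ij}t_j$, and the block inclusion $A\rtimes H\hookrightarrow\End_{A\rtimes H}(A\rtimes G)$ becomes the usual corner embedding; equivalently, on K-theory $r_H$ is restriction of finitely generated projective modules along $A\rtimes H\subseteq A\rtimes G$. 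With this in hand, (a) and (b) are routine: for (a) one picks the coset representatives compatibly — a transversal of $H_1$ in $G$ as the product of one of $H_2$ in $G$ with one of $H_1$ in $H_2$ — so that the corner embeddings and the maps $\varphi$ compose correctly (up to a harmless permutation of basis vectors), whence $r^{H_2}_{H_1}\circ r^G_{H_2}=r^G_{H_1}$ by transitivity of restriction of scalars; for (b), a $G$-equivariant $f\colon A\to B$ induces $M_n(f\rtimes H)$ on the endomorphism algebras, which together with $f\rtimes G$ makes all the squares in the defining diagram of $r_H$ commute.

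The main obstacle is part (c) for $r_H$, since $r_H$ is not induced by a single $*$-homomorphism and $\partial$ is defined only indirectly. The remedy is to apply the endomorphism construction to the whole short exact sequence at once. With the transversal fixed, the algebras $\End_{(-)\rtimes H}((-)\rtimes G)$ attached to $J$, $A$, $A/J$ are identified with $M_n(J\rtimes H)$, $M_n(A\rtimes H)$, $M_n((A/J)\rtimes H)$, which again form a short exact sequence (the functor $M_n(-)$ being exact); and both $\varphi$ and the corner embedding are given by formulas uniform in $J$, $A$, $A/J$, so they assemble into morphisms of short exact sequences — one from $\bigl(J\rtimes G\to A\rtimes G\to(A/J)\rtimes G\bigr)$ and one from $\bigl(J\rtimes H\to A\rtimes H\to(A/J)\rtimes H\bigr)$, both with target $\bigl(M_n(J\rtimes H)\to M_n(A\rtimes H)\to M_n((A/J)\rtimes H)\bigr)$. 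Applying naturality of $\partial$ to these two morphisms and composing — the second being inverted, since the corner embeddings are K-isomorphisms — yields the square in (c) for $r_H$; one still checks that $\varphi$ and the corner embedding commute with the ideal inclusions and quotient maps, which is immediate from their explicit descriptions. Finally, the closing assertion of the lemma follows from (b) and (c): the K-theory long exact sequence of a short exact sequence of $G$-$C^*$-algebras is built from the maps induced by the equivariant $*$-homomorphisms $J\to A$ and $A\to A/J$ together with the connecting maps, and the Mayer-Vietoris sequence is derived from such long exact sequences. (A more conceptual but heavier route would realise $i_H$ as the $KK$-class of $\iota_H^G$ and $r_H$ as the $KK$-class of the finite-index $(A\rtimes G,A\rtimes H)$-bimodule $A\rtimes G$, after which (a)--(c) become functoriality and associativity of the Kasparov product; I prefer to keep the elementary argument, in line with the rest of the paper.)
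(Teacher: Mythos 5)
Your proof is correct and follows exactly the strategy the paper indicates: the paper's own proof merely states that the properties follow from writing out the relevant diagrams of $C^*$-algebras (crossed products, endomorphism algebras, corner embeddings) and checking commutativity, leaving the details to the reader. Your proposal supplies precisely those details, including the only genuinely delicate point --- compatibility of $r_H$ with the boundary maps, handled via morphisms of short exact sequences into $M_n((-)\rtimes H)$ --- so it is a faithful, fully written-out version of the intended argument.
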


\begin{proof}
$i_H$ and $r_H$ are defined using the functoriality of K-theory for certain maps between naturally constructed $C^*$-algebras. Proving these properties comes down to writing the relevant diagrams of $C^*$-algebras and showing that the maps commute. We leave the details to the reader.
\end{proof}

\begin{lemma}\label{lemma:roi}
    The composition $r_H \circ i_H \colon K^{H}(A)_{\Q} \to K^{H}(A)_{\Q}$ is given by 
    \[
        r_H \circ i_H(x) =
        \frac{[G : H]}{|G|} \sum_{g \in G} g \cdot x.
    \]
In other words, $r_H \circ i_H$ is $[G :H]$ times the averaging map. In particular, $r_H \circ i_H$ is multiplication by $[G:H]$ when restricted to $K^H(A)^G$.
\end{lemma}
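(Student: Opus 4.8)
The plan is to compute the composition at the level of $C^*$-algebras by chasing through the definitions of $i_H$ and $r_H$ given above, and to identify the resulting endomorphism of $A \rtimes H$ as an explicit $(A\rtimes H)$-bimodule/idempotent-matrix whose class in K-theory is then read off. First I would reduce to the unital case, since both maps are defined on the unitalisation and restricted; so assume $A$ is unital, hence $A \rtimes G$ and $A\rtimes H$ are unital. Recall that $i_H$ is induced by the inclusion $\iota \colon A\rtimes H \hookrightarrow A \rtimes G$, and $r_H$ is (the inverse of) the map $A\rtimes H \xrightarrow{\ \sim\ } \End_{A\rtimes H}(A\rtimes G)$, $f_0 \mapsto (x \mapsto p_H(x)\cdot f_0)$, where $p_H\colon A\rtimes G \to A\rtimes H$ is the canonical conditional-expectation-type projection $\sum_{g\in G} a_g g \mapsto \sum_{h\in H} a_h h$ onto the $A\rtimes H$-submodule generated by $H$.

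Next I would make the composition concrete. Choose coset representatives $g_1 = e, g_2, \dots, g_n$ for $G/H$ with $n = [G:H]$, so that $A\rtimes G \cong \bigoplus_{j} g_j (A\rtimes H)$ as a right $A\rtimes H$-module; under this identification $\End_{A\rtimes H}(A\rtimes G)\cong M_n(A\rtimes H)$. The map $r_H$ is the inverse of the corner inclusion $A\rtimes H \to M_n(A\rtimes H)$ into the $(1,1)$ entry. On the other hand, $i_H\colon K(A\rtimes H)\to K(A\rtimes G)$, followed by the Morita-type isomorphism $K(A\rtimes G)\cong K(\End_{A\rtimes H}(A\rtimes G))\cong K(M_n(A\rtimes H))$, is given at the module level by $x \mapsto x$ viewed inside $A\rtimes G$, i.e. by the $\End_{A\rtimes H}$-module map "multiply by $\iota(x)\in A\rtimes G$". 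So $r_H\circ i_H$ on $K(A\rtimes H)$ equals the composite $K(A\rtimes H) \xrightarrow{\text{corner}^{-1}\circ(\cdot\,\iota(-))} K(A\rtimes H)$, and the key computation is: for $f_0 \in A\rtimes H$, the operator "right multiplication by $f_0\in A\rtimes G$" on $A\rtimes G = \bigoplus_j g_j(A\rtimes H)$, expressed as a matrix in $M_n(A\rtimes H)$, has $(i,j)$ entry equal to the $H$-component of $g_i^{-1} g_j f_0$... and tracking this through, the upshot is that the induced map on K-theory is the sum over $j$ of the maps induced by conjugation $x \mapsto g_j^{-1} x g_j$ on $A\rtimes H$ — which is exactly the $G$-action $x\mapsto x^{g}$ defined above, summed over coset representatives. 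Since inner automorphisms by elements of $H$ act trivially on $K(A\rtimes H)$ (they are homotopic to the identity), this sum over coset representatives equals $\frac{1}{|H|}\sum_{g\in G} (g\cdot -) = \frac{[G:H]}{|G|}\sum_{g\in G}(g\cdot -)$ on $K^H(A)_\Q$, which is the claimed formula. The final sentence of the lemma is then immediate: on $K^H(A)^G$ every term $g\cdot x$ equals $x$, so the sum is $|G|$ copies of $x$ scaled by $[G:H]/|G|$, i.e. multiplication by $[G:H]$.

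The main obstacle I anticipate is the bookkeeping in identifying the matrix of "right multiplication by $\iota(f_0)$" on $\bigoplus_j g_j(A\rtimes H)$ and checking that, after applying $K$-theory, it collapses to the sum of conjugation automorphisms rather than something more complicated — one must be careful that the off-diagonal entries of this matrix do not contribute (they do not, because the class of an idempotent/invertible in $M_n$ depends only on the appropriate trace-like data, equivalently because one can conjugate the matrix by a permutation-type unitary to block-diagonal form up to homotopy). A clean way to avoid explicit matrices is to note that $i_H$ and $r_H$ fit the Green–Julg / induction–restriction framework and that $r_H\circ i_H$ is, by general nonsense about crossed products, the map induced by the $(A\rtimes H, A\rtimes H)$-bimodule $A\rtimes G$ with its two commuting $A\rtimes H$-actions; decomposing this bimodule over coset representatives gives $\bigoplus_j {}_{g_j}(A\rtimes H)$, where ${}_{g_j}(A\rtimes H)$ is $A\rtimes H$ with left action twisted by conjugation by $g_j$, and the class of such a twist-bimodule in $KK$ or in $\End$ is exactly the automorphism $g_j\cdot(-)$. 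Rationalising and using triviality of inner automorphisms then finishes the proof.
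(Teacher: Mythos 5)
Your proposal is correct and follows essentially the same route as the paper: reduce to the unital case, decompose $A \rtimes G$ over coset representatives as a free $A \rtimes H$-module, and identify the composite algebra map $A \rtimes H \to \End_{A\rtimes H}(A\rtimes G) \cong M_{[G:H]}(A\rtimes H)$ as the diagonal matrix of conjugates $x^{g_i}$, so that on K-theory it becomes $x \mapsto \sum_i g_i\cdot x$ (the off-diagonal entries you worry about are genuinely zero, since right multiplication by an element of $A\rtimes H$ preserves each coset summand). The only cosmetic difference is that the paper first factors $i_H$ through the averaging map onto $K^H(A)^G_\Q$ and verifies the formula on invariants, whereas you pass from the coset sum to the full average via triviality of inner automorphisms; these are equivalent.
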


\begin{proof}
The map $i_H \colon K^H(A)_\Q \to K^G(A)_\Q$ takes values in the $G$-invariants because $G$ acts trivially on $K^G(A)$. Therefore $i_H$ factors through $K^H(A)^G_{\Q}$ via the averaging map. This is where taking the tensor product with $\Q$ is necessary. It now suffices to show that $r_H \circ i_H$ restricted to $K^H(A)^G_\Q$ is multiplication by $[G:H]$.

Assume that $A$ is unital; it is sufficient to prove the theorem in this case. Choosing representatives $g_i$ for elements of $G/H$, one has an isomorphism 
\[
\End_{A \rtimes H}(A \rtimes G) \cong M_{[G:H]}(A \rtimes H).
\]
The induced isomorphism on K-theory does not depend on the choice of representatives. The composition
\[
    A \rtimes H 
    \hookrightarrow
    A \rtimes G
    \hookrightarrow
    \End_{A \rtimes H}(A \rtimes G) 
    \cong 
    M_{[G:H]}(A \rtimes H)
\]
is given by
\[
    x \mapsto 
    \begin{bmatrix}
        x^{g_1} & 0 && \\
        0 & x^{g_2} && \\
        && \ddots & \\
        &&& x^{g_{[G:H]}}
    \end{bmatrix}.
\]
The induced map on K-theory is
\[
    K^H(A) \to K^H(A), 
    \quad
    x \mapsto \sum_{i=1}^{[G:H]} x^{g_i}.
\]
After restricting to $G$-invariants, this is multiplication by $[G:H]$, as required.
\end{proof}

\begin{lemma}\label{lemma:rest-ind}
Let $H_1$ and $H_2$ be subgroups of $G$. The following diagram commutes up to multiplication by $[G:H_1H_2]$:
\begin{equation}\label{diag:rest-ind}
\begin{tikzcd}
    K^{H_1}(A)^G \arrow[r, "i_{H_1}"] \arrow[d, "r_{H_1 \cap H_2}", swap]
        & K^G(A) \arrow[d, "r_{H_2}"]
    \\
    K^{H_1 \cap H_2}(A)^G \arrow[r, "i_{H_1 \cap H_2}"] 
        & K^{H_2}(A)^G
\end{tikzcd}
\end{equation}
\end{lemma}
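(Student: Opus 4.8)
The plan is to recognise this as a Mackey-type double coset formula and to prove it in two steps: first reduce to the case $G = H_1 H_2$ using Lemma~\ref{lemma:roi} together with the functoriality properties of Lemma~\ref{lemma:functoriality}, and then verify that case directly from the definitions of $i_H$ and $r_H$ in terms of crossed products. Since $G$ is abelian, $K := H_1 H_2$ is a subgroup, and the double cosets $H_2 \backslash G / H_1$ are simply the $|G|/|K| = [G : H_1 H_2]$ cosets of $K$; on the $G$-invariant part each contributes the same term, which is the source of the factor $[G : H_1 H_2]$.

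For the reduction, note that $H_1 \subseteq K \subseteq G$ and $H_2 \subseteq K \subseteq G$, so Lemma~\ref{lemma:functoriality}(a) lets us factor $i_{H_1} = i_K^G \circ i_{H_1}^K$ and $r_{H_2} = r_{H_2}^K \circ r_K^G$. For $x \in K^{H_1}(A)^G$, the element $i_{H_1}^K(x)$ lies in $K^K(A)^G$ because $i_{H_1}^K$ is $G$-equivariant (the inclusion $A \rtimes H_1 \hookrightarrow A \rtimes K$ intertwines the conjugation actions), so Lemma~\ref{lemma:roi} gives $r_K^G \circ i_K^G\bigl(i_{H_1}^K(x)\bigr) = [G : K]\, i_{H_1}^K(x)$. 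Hence $r_{H_2} \circ i_{H_1}(x) = [G : H_1 H_2] \cdot \bigl(r_{H_2}^K \circ i_{H_1}^K(x)\bigr)$, and it remains to prove that when $G = H_1 H_2$ the diagram \eqref{diag:rest-ind} commutes exactly --- indeed for \emph{all} $x \in K^{H_1}(A)$, without any invariance hypothesis.

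For that case we may assume $A$ is unital, the general case following by passing to unitalisations as in the definition of $r_H$. When $G = H_1 H_2$ one has $[G : H_2] = [H_1 : H_1 \cap H_2] =: m$, so one can choose $u_1 = e, u_2, \dots, u_m \in H_1$ that simultaneously represent the cosets in $(H_1 \cap H_2) \backslash H_1$ and in $H_2 \backslash G$. Using these representatives to write $\End_{A \rtimes H_2}(A \rtimes G) \cong M_m(A \rtimes H_2)$ and $\End_{A \rtimes (H_1 \cap H_2)}(A \rtimes H_1) \cong M_m(A \rtimes (H_1 \cap H_2))$, one unwinds the definitions to see that $r_{H_2} \circ i_{H_1}([x])$ and $r_{H_1 \cap H_2}([x])$ are both obtained from the matrix of right multiplication by $x \in A \rtimes H_1$ --- on $A \rtimes G$ in the first case, on $A \rtimes H_1$ in the second --- with respect to the $u_j$. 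The key point is that for $h \in H_1$ the element $u_j h u_k^{-1}$, where $u_k$ is the representative with $u_j h \in H_2 u_k$, lies in $H_1 \cap H_2$: it is in $H_1$ because $u_j, h, u_k \in H_1$, and in $H_2$ by the choice of $k$. Consequently the two matrices are literally equal, as an $m \times m$ matrix over $A \rtimes (H_1 \cap H_2)$; since the entrywise inclusion $M_m(A \rtimes (H_1 \cap H_2)) \hookrightarrow M_m(A \rtimes H_2)$ induces $i_{H_1 \cap H_2}$ on $K$-theory, and the corner inclusions used to define $r_{H_2}$ and $r_{H_1 \cap H_2}$ induce the standard identifications $K(M_m(-)) \cong K(-)$, we conclude $r_{H_2} \circ i_{H_1} = i_{H_1 \cap H_2} \circ r_{H_1 \cap H_2}$ in this case.

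The main obstacle is this last step: keeping the module-side conventions in the definitions of $i_H$ and $r_H$ straight --- which side $A \rtimes G$ is a module over, the identification $A \rtimes G = \End_{A \rtimes G}(A \rtimes G)$, and which map plays the role of the ``block inclusion'' --- so that the two composites really are computed by the same matrix, and verifying that the entries of that matrix do land in $A \rtimes (H_1 \cap H_2)$. It is precisely the hypothesis $G = H_1 H_2$, i.e.\ $[G : H_2] = [H_1 : H_1 \cap H_2]$ --- which rests on $G$ being abelian, so that $H_1 H_2$ is a subgroup with $|H_1 H_2| = |H_1|\,|H_2|/|H_1 \cap H_2|$ --- that makes the simultaneous choice of representatives, hence the whole computation, possible.
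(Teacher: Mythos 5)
Your proposal is correct and follows essentially the same route as the paper: reduce to the case $G = H_1H_2$ via Lemma~\ref{lemma:roi} and the transitivity properties of Lemma~\ref{lemma:functoriality}, then verify that case by an explicit computation with the crossed products and the endomorphism algebras $\End_{A\rtimes H_2}(A\rtimes G)$ and $\End_{A\rtimes(H_1\cap H_2)}(A\rtimes H_1)$. The only cosmetic difference is that the paper defines the comparison map between these endomorphism algebras abstractly and checks a commutative diagram of $C^*$-algebras, whereas you choose simultaneous coset representatives in $H_1$ and observe that the two matrices literally coincide --- the same computation in different packaging.
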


\begin{proof}
First, we prove the result when $G = H_1H_2$. Consider the following diagram:
\begin{equation}\label{diag:rest-indprod}
\begin{tikzcd}
    A \rtimes H_1  \arrow[d] \arrow[r]
    &   A \rtimes H_1H_2 \arrow[d] \\
    \End_{A \rtimes H_1 \cap H_2}(A \rtimes H_1) \arrow[r]
    & \End_{A \rtimes H_2}(A \rtimes H_1H_2) \\
    A \rtimes H_1 \cap H_2 \arrow[r] \arrow[u]
    & A \rtimes H_2 \arrow[u]
\end{tikzcd}
\end{equation}
The map in the middle row is defined as follows. Let $f \colon A \rtimes H_1 \to A \rtimes H_1$ be an $(A \rtimes H_1 \cap H_2)$-module morphism and $x = \sum_{g \in H_1H_2} a_g g \in A \rtimes H_1H_2$. For each $h_2 \in H_2$ we set $x_{h_2} := \sum_{h_1 \in H_1} a_{h_1h_2} h_1 \in A \rtimes H_1$. Applying $f$ to $x_{h_2}$ gives another element, say $f(x_{h_2}) = \sum_{h_1 \in H_1} \tilde a_{h_1,h_2} h_1$. Thus, given $f \colon A \rtimes H_1 \to A \rtimes H_1$, we define an endomorphism of $A \rtimes H_1H_2$ by
\[
x = \sum_{g \in H_1H_2} a_g g 
\longmapsto
\sum_{h_1h_2 \in H_1H_2} \tilde a_{h_1, h_2} h_1h_2.
\]
One must check that if $h_1h_2 = h_1'h_2' \in H_1H_2$ then $\tilde a_{h_1, h_2} = \tilde a_{h_1', h_2'}$. This follows from the fact that $f$ is an $(A \rtimes H_1 \cap H_2)$-module map. One also checks that the resulting map is an $(A \rtimes H_2)$-module map.

Diagram \eqref{diag:rest-indprod} commutes, and the two outer paths from $A \rtimes H_1$ to $A \rtimes H_2$ define the two maps from $K^{H_1}(A)^G$ to $K^{H_2}(A)^G$ described in the theorem. Thus, the result holds for $G = H_1H_2$.

The general case is implied by the case $G = H_1H_2$. Consider the following diagram, which describes the composition $K^{H_1}(A)^G \to K^G(A) \to K^{H_2}(A)^G$:
\[
\begin{tikzcd}
    & & K^G(A) \arrow[rd, "r_{H_1H_2}", swap] \arrow[rrd, "r_{H_2}"] & & \\
    K^{H_1}(A)^G \arrow[rru, "i_{H_1}"] \arrow[r, "i_{H_1}", swap] 
    & K^{H_1H_2}(A)^G \arrow[ru, "i_{H_1H_2}", swap] \arrow[rr, "{\cdot [G:H_1H_2]}", swap] 
    && K^{H_1H_2}(A)^G \arrow[r, "r_{H_2}", swap] 
    & K^{H_2}(A)^G
\end{tikzcd}
\]
The commutativity of the left and right triangles follows from Lemma \ref{lemma:functoriality} and commutativity of the center triangle is a result of the Lemma \ref{lemma:roi}. The composition $K^{H_1}(A)^G \to K^{H_1 \cap H_2}(A)^G \to K^{H_2}(A)^G$ remains unchanged, so we conclude that diagram \eqref{diag:rest-indprod} commutes up to multiplication by $[G:H_1H_2]$.
\end{proof}

\section{The Subgroup Independent Component of \texorpdfstring{$K^G(A)$}{K G(A)}}\label{sec:K>}

Let $G$ be a finite abelian group in this section as well. The collection of maps $r_H \colon K^G(A) \to K^H(A)^G$ give rise to a map $K^G(A) \to \bigoplus_{H \subsetneq G} K^H(A)^G$. Consider the shared kernel of all of these maps:

\begin{definition}
    \[
    K^G_{>}(A) := \ker \Bigl( K^G(A) \to \bigoplus_{H \subsetneq G} K^H(A)^G \Bigr)
    \]
\end{definition}

$K^G_>(A)$ is the component of $K^G(A)$ that doesn't depend on any subgroups of $G$. As an example, if $A = \C$, then $K^G(A) = R(G)$ and $K^G_>(A)$ is the subspace of characters $G \to \C$ that vanish on all proper subgroups of $G$. 

Note that, by definition, the restriction of $r_H \colon K^G(A) \to K^H(A)$ to $K^G_>(A)$ is the zero map. Lemma \ref{lemma:rest-ind} then implies that for $H_1, H_2 \subsetneq G$ and $x \in K^{H_1}_>(A)^G$,
\[
r_{H_2} \circ i_{H_1}(x)
=
\begin{cases}
    [G : H_1] \cdot x& H_1 = H_2,\\ 
    0, & H_1 \neq H_2.
\end{cases}
\]

\begin{lemma}\label{lemma:K>alt}
    There is a canonical isomorphism
\[
K^G_>(A)_\Q 
\cong 
K^G(A)_\Q  \,\, / \sum_{H \subsetneq G} i_H \bigl( K^H_>(A)^G_\Q \bigr).
\]
\end{lemma}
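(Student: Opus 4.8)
The plan is to deduce this from the sharper statement that the map
\[
\Phi\colon \bigoplus_{H\subseteq G} K^H_>(A)^G_\Q \longrightarrow K^G(A)_\Q,\qquad (y_H)_H \longmapsto \sum_{H\subseteq G} i_H(y_H),
\]
with the sum over \emph{all} subgroups of $G$ and $i_G=\mathrm{id}$, is an isomorphism. Granting this: $G$ acts trivially on $K^G(A)$, so the $H=G$ summand is $K^G_>(A)_\Q$ and $\Phi$ restricted to it is the inclusion into $K^G(A)_\Q$, while the remaining summands map onto $\sum_{H\subsetneq G} i_H(K^H_>(A)^G_\Q)$; hence $K^G(A)_\Q = K^G_>(A)_\Q \oplus \sum_{H\subsetneq G} i_H(K^H_>(A)^G_\Q)$, and dividing by the second summand gives the lemma, the isomorphism being the one induced by $K^G_>(A)_\Q \hookrightarrow K^G(A)_\Q$. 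Throughout I write $i^K_L$, $r^K_L$ for induction and restriction between $L\subseteq K\subseteq G$ (so $i_H=i^G_H$, $r_H=r^G_H$); these are all $G$-equivariant, hence preserve $G$-invariant subspaces, and Lemmas \ref{lemma:functoriality}, \ref{lemma:roi} and \ref{lemma:rest-ind} apply with the ambient group replaced by any subgroup $K$, now with respect to $K$-invariants, which contain the $G$-invariants.

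The one computation I need is the composite formula: for $L\subseteq G$, $y\in K^L_>(A)^G_\Q$ and $K\subseteq G$,
\[
r^G_K\!\bigl(i^G_L(y)\bigr) =
\begin{cases}
[G:K]\, i^K_L(y), & L\subseteq K,\\
0, & L\not\subseteq K,
\end{cases}
\]
and likewise with $G$ replaced by any subgroup containing $L$ and $K$. This follows from Lemma \ref{lemma:rest-ind}, which gives $r^G_K\circ i^G_L = [G:LK]\cdot i^K_{L\cap K}\circ r^L_{L\cap K}$, together with $r^L_L=\mathrm{id}$ and the fact that $r^L_{L\cap K}(y)=0$ whenever $L\cap K\subsetneq L$, since $y\in K^L_>(A)$. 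Injectivity of $\Phi$ is then immediate: if $\Phi((y_H)_H)=0$ while some $y_H\ne 0$, pick $H_0$ minimal among the subgroups with $y_{H_0}\ne 0$ and apply $r^G_{H_0}$; every summand dies — those with $H\not\subseteq H_0$ by the formula, those with $H\subsetneq H_0$ by minimality — except $[G:H_0]\,y_{H_0}$, forcing $y_{H_0}=0$ because $[G:H_0]$ is invertible in $\Q$.

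The real content is surjectivity of $\Phi$, an analogue of Artin's induction theorem, which I would prove by solving a triangular system. Given $x\in K^G(A)_\Q$, I construct $y_H\in K^H_>(A)^G_\Q$ for every $H\subsetneq G$, inductively on $|H|$, so that
\[
y_H + \sum_{L\subsetneq H} i^H_L(y_L) = \tfrac{1}{[G:H]}\, r^G_H(x) \qquad (\ast).
\]
One defines $y_H$ from the already-built $y_L$ ($L\subsetneq H$) by $(\ast)$; then $y_H\in K^H(A)^G_\Q$ automatically, and for each proper $K\subsetneq H$, applying $r^H_K$ to $(\ast)$, using $r^H_K\circ r^G_H = r^G_K$ (Lemma \ref{lemma:functoriality}), the composite formula inside $H$, and $(\ast)$ for $K$, gives
\[
r^H_K(y_H) = \tfrac{1}{[G:H]} r^G_K(x) - [H:K]\Bigl(y_K + \sum_{L\subsetneq K} i^K_L(y_L)\Bigr) = \Bigl(\tfrac{1}{[G:H]} - \tfrac{[H:K]}{[G:K]}\Bigr) r^G_K(x) = 0,
\]
using $[G:K]=[G:H][H:K]$; so $y_H\in K^H_>(A)$, closing the induction (for $H$ with no proper subgroup the check is vacuous). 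Setting $z := \sum_{H\subsetneq G} i^G_H(y_H)$, the composite formula and $(\ast)$ give $r^G_H(z) = [G:H]\bigl(y_H + \sum_{L\subsetneq H} i^H_L(y_L)\bigr) = r^G_H(x)$ for all $H\subsetneq G$, so $x-z\in\bigcap_{H\subsetneq G}\ker r^G_H = K^G_>(A)_\Q$, and $x = (x-z)+z$ lies in the image of $\Phi$.

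I expect the surjectivity step to be the main obstacle: one has to hit on the triangular bookkeeping $(\ast)$ and then verify, at every stage of the recursion, that the class $y_H$ just defined genuinely lies in $K^H_>(A)$ — this is exactly where Lemmas \ref{lemma:roi} and \ref{lemma:rest-ind}, and the freedom to apply them with a varying ambient subgroup, do the work. Everything else is formal, and injectivity plus surjectivity of $\Phi$ then yield the splitting $K^G(A)_\Q = K^G_>(A)_\Q \oplus \sum_{H\subsetneq G} i_H(K^H_>(A)^G_\Q)$, hence the lemma.
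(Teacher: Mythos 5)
Your argument is correct, and it takes a genuinely different route from the paper. The paper's proof exhibits a closed-form candidate inverse, $[x]\mapsto x-\sum_{H\subsetneq G}\tfrac{1}{[G:H]}\,i_H\circ r_H(x)$, verifies via the orthogonality relation $r_{H_2}\circ i_{H_1}=[G:H_1]\,\delta_{H_1,H_2}$ on $K^{H_1}_>(A)^G$ that this kills $\sum_{H}i_H(K^H_>(A)^G_\Q)$, and leaves the remaining checks to the reader. You instead prove the sharper statement that $\bigoplus_{H\subseteq G}K^H_>(A)^G_\Q\to K^G(A)_\Q$ is an isomorphism --- essentially the split exact sequence of Theorem \ref{sestheorem} --- by solving the triangular system $(\ast)$, and deduce the lemma from that. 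Your extra bookkeeping is not wasted: the paper's one-shot formula does not in general land in $K^G_>(A)_\Q$, because $r_H(x)$ need not lie in $K^H_>(A)$, so the orthogonality relation cannot be applied to the cross terms $r_K\circ i_H\circ r_H(x)$ for $K\subsetneq H$. Concretely, for $G=\Z/4$, $A=\C$ and $x$ a faithful character of $\Z/4$, the paper's formula returns a virtual character of virtual dimension $-1$, which therefore does not restrict to zero on the trivial subgroup; the representative of $[x]$ in $K^G_>(A)_\Q$ is $x-\tfrac12 i_{\Z/2}r_{\Z/2}(x)$ (the contribution of the trivial subgroup cancels), and this is exactly what your recursion $(\ast)$ produces. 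So the recursive construction of the classes $y_H$ --- equivalently, a M\"obius-type inversion over the subgroup lattice rather than the ``flat'' coefficients $\tfrac{1}{[G:H]}$ --- is genuinely needed, and your write-up supplies it. The only point to make fully explicit in a final version is the one you already flag: Lemmas \ref{lemma:roi} and \ref{lemma:rest-ind} are invoked with the ambient group $G$ replaced by a subgroup $H$ and with $H$-invariants in place of $G$-invariants, which is legitimate because all the induction and restriction maps are $G$-equivariant and $K^L(A)^G\subseteq K^L(A)^H$.
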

\begin{proof}
    The isomorphism is the quotient map restricted to $K^G_>(A)$. The inverse is
 \[
   K^G(A)_\Q  \,\, / \sum_{H \subsetneq G} i_H \bigl( K^H_>(A)^G_\Q \bigr)
   \longrightarrow
   K^G_>(A)_\Q,
   \quad
    [x] 
    \mapsto
    x - \sum_{H \subsetneq G} \frac{1}{[G:H]}i_H \circ r_H(x).
 \]
 This is well defined, since if $x_H \in K^H_>(A)$, then
 \begin{align*}
    \sum_{H \subsetneq G} i_H(x_H) 
    &\mapsto 
    \sum_{H \subsetneq G} i_H(x_H) - \sum_{H_1 \subsetneq G}  \sum_{H_2 \subsetneq G} \frac{1}{[G:H_1]} i_{H_1} \circ r_{H_1} \circ i_{H_2}(x_{H_2}) \\
    &= \sum_{H \subsetneq G} i_H(x_H) -\sum_{H \subsetneq G} i_H(x_H) \\
    &= 0,
\end{align*}
One easily checks that this is indeed the inverse.
\end{proof}

This lemma determines a preferred splitting projection onto $K^G_>(A)_\Q$ given by
\[
K^G(A)_\Q \to K^G_>(A)_\Q,
\qquad 
x \mapsto x - \sum_{H \subsetneq G} \frac{1}{[G:H]} i_H \circ r_H(x).
\]
The following lemma shows that the $K_>$-groups inherit exactness properties from the $K$-groups:

\begin{lemma}\label{lemma:liftto>}
Consider a map $K^G(A)_\Q \to K^G(B)_\Q$ and an element $x \in K^G_>(B)_\Q$. If there exists a lift of $x$ to $K^G(A)_\Q$, then there exists a lift of $x$ to $K^G_>(A)_\Q$. 
\end{lemma}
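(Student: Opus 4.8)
The plan is to exploit the explicit splitting projection onto $K^G_>(-)_\Q$ introduced immediately after Lemma~\ref{lemma:K>alt}. Write $\pi_A \colon K^G(A)_\Q \to K^G_>(A)_\Q$ for the projection $\pi_A(y) = y - \sum_{H \subsetneq G} \tfrac{1}{[G:H]}\, i_H \circ r_H(y)$, and likewise $\pi_B$ for $B$. Denote the given map by $\phi \colon K^G(A)_\Q \to K^G(B)_\Q$.

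The first step is to record that $\phi$ intertwines these projections, i.e.\ $\phi \circ \pi_A = \pi_B \circ \phi$. This holds because the maps that actually occur in our applications of this lemma --- a map $f_*$ induced by a $G$-equivariant morphism, a boundary map $\partial$ in a K-theory long exact sequence, or one of the maps in a Mayer--Vietoris sequence --- all commute with every induction map $i_H$ and every restriction map $r_H$ by Lemma~\ref{lemma:functoriality}, hence with each operator $i_H \circ r_H$, hence with the projections $\pi_A$ and $\pi_B$.

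The second step is the observation that $\pi_B$ acts as the identity on $K^G_>(B)_\Q$: by the very definition of $K^G_>$, the restriction maps $r_H$ vanish on $K^G_>(B)_\Q$ for all $H \subsetneq G$, so every correction term $\tfrac{1}{[G:H]}\, i_H \circ r_H$ vanishes there. Now suppose $y \in K^G(A)_\Q$ is a lift of $x$, that is $\phi(y) = x$. Then $\pi_A(y) \in K^G_>(A)_\Q$ by construction, and
\[
\phi\bigl(\pi_A(y)\bigr) = \pi_B\bigl(\phi(y)\bigr) = \pi_B(x) = x,
\]
so $\pi_A(y)$ is the desired lift of $x$ to $K^G_>(A)_\Q$.

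There is essentially no obstacle beyond making precise that the map in the statement is of a type to which Lemma~\ref{lemma:functoriality} applies, so that it respects the two preferred projections; for a genuinely arbitrary linear map the statement would be false, so the point worth stating clearly is exactly this naturality with respect to $i_H$ and $r_H$. Granting it, the proof is the one-line diagram chase above.
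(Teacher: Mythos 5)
Your strategy is structurally the paper's own: take an arbitrary lift $\tilde x$ of $x$, correct it by terms of the form $\frac{1}{[G:H]}\,i_H(\cdots)$, and use that the map commutes with all $i_H$ and $r_H$ (Lemma \ref{lemma:functoriality}) while every correction term vanishes on $K^G_>(B)_\Q$. You are also right to flag that the statement is false for an arbitrary linear map and that the real content is naturality with respect to $i_H$ and $r_H$. The gap is in your first step: the operator $\pi_A(y) = y - \sum_{H \subsetneq G} \frac{1}{[G:H]}\, i_H(r_H(y))$ does \emph{not} take values in $K^G_>(A)_\Q$, so $\pi_A(y)$ is a lift of $x$ but not necessarily a lift to $K^G_>(A)_\Q$. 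The correction terms for nested proper subgroups interfere: by Lemma \ref{lemma:rest-ind}, for $H_1 \subsetneq H_2 \subsetneq G$ and $z \in K^{H_1}_>(A)^G$ one has $r_{H_2}(i_{H_1}(z)) = [G:H_2]\, i_{H_1}(z)$, which need not vanish. Concretely, take $G = \Z/4$, $A = \C$, and $y = i_{\{e\}}(1) \in R(\Z/4)$ the regular representation; then $r_{\{e\}}(y) = 4$ and $r_{H}(y) = 2(1+\sigma)$ for the order-two subgroup $H$, so both correction terms equal the regular representation and $\pi_A(y) = -y$, which satisfies $r_{\{e\}}(\pi_A(y)) = -4 \neq 0$ and hence does not lie in $K^G_>$. (The same caveat applies to the displayed ``preferred splitting projection'' following Lemma \ref{lemma:K>alt}, whose formula you quoted; as literally written it is not a projection onto $K^G_>(A)_\Q$.)

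The repair is what the paper's proof actually does: before inducing back up, replace $r_H(\tilde x)$ by its image $\tilde x^>_H$ under the (recursively defined) projection $K^H(A)^G_\Q \to K^H_>(A)^G_\Q$, and use the lift $\tilde x - \sum_{H \subsetneq G} \frac{1}{[G:H]}\, i_H(\tilde x^>_H)$. Since these inner projections are themselves built out of the maps $i_{H'}$ and $r_{H'}$ for $H' \subsetneq H$, your two observations survive verbatim for the corrected operator: it still commutes with the given map by Lemma \ref{lemma:functoriality}, and it still restricts to the identity on $K^G_>(B)_\Q$ because every $r_H$ vanishes there. With that single insertion your diagram chase closes the argument.
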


\begin{proof}
Let $\tilde x$ be a lift of $x$ to $K^G(A)$. For each $H \subsetneq G$, let $\tilde x^>_H$ the projection of $r_H(\tilde x)$ onto $K_H^>(A)$. Then $\tilde x - \sum_{H}\frac{1}{[G:H]} i_H(\tilde x^>_H)$ is a lift of $x$ to $K^G_>(A)$. \end{proof}

Now for the main theorem of this section:

\begin{theorem}\label{sestheorem}
There is the following split short exact sequence:
\begin{equation}\label{diag:sesK>}
    0 
    \longrightarrow
    \bigoplus_{H \subsetneq G} K^H_>(A)^G_\Q
    \longrightarrow
    K^G(A)_\Q 
    \longrightarrow
    K^G_>(A)_\Q 
    \to 
    0
\end{equation}
This is natural with respect to maps of $C^*$-algebras and boundary maps in the K-theory long exact sequence.
\end{theorem}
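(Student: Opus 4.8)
The plan is to take the first map to be $\bigoplus_{H\subsetneq G} i_H$ and the third map to be the preferred splitting projection produced by Lemma \ref{lemma:K>alt},
\[
\pi\colon K^G(A)_\Q \to K^G_>(A)_\Q, \qquad \pi(x) = x - \sum_{H\subsetneq G}\frac{1}{[G:H]}\, i_H\circ r_H(x),
\]
and then to read off all of the exactness and splitting from the lemmas already in hand. The image of $\bigoplus_H i_H$ is, by definition, the subspace $\sum_{H\subsetneq G} i_H\bigl(K^H_>(A)^G_\Q\bigr)$, and Lemma \ref{lemma:K>alt} identifies $K^G(A)_\Q$ modulo this subspace with $K^G_>(A)_\Q$ in such a way that the quotient map is precisely $\pi$; hence $\ker\pi$ equals the image of $\bigoplus_H i_H$, and $\pi$ is surjective. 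This already gives exactness at the middle and at the right. Since every $r_H$ vanishes on $K^G_>(A)_\Q$, the projection $\pi$ restricts to the identity there, so the inclusion $K^G_>(A)_\Q\hookrightarrow K^G(A)_\Q$ is a section of $\pi$ and the sequence splits.

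The one point requiring a genuine argument is exactness at the left, i.e.\ injectivity of $\bigoplus_{H\subsetneq G} i_H$. Here I would invoke the orthogonality relation recorded just before Lemma \ref{lemma:K>alt}: for $H_1,H_2\subsetneq G$ and $x\in K^{H_1}_>(A)^G$ one has $r_{H_2}\circ i_{H_1}(x) = [G:H_1]\,x$ when $H_1 = H_2$ and $0$ otherwise (this is itself a consequence of Lemmas \ref{lemma:functoriality}, \ref{lemma:roi} and \ref{lemma:rest-ind}). Given a tuple $(x_H)_{H\subsetneq G}$ with each $x_H\in K^H_>(A)^G_\Q$ and $\sum_H i_H(x_H) = 0$, one applies $r_{H'}$ for a fixed $H'\subsetneq G$: every term with $H\neq H'$ dies and one is left with $[G:H']\,x_{H'} = 0$, whence $x_{H'} = 0$ since $[G:H']$ is invertible in $\Q$. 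This is exactly the step where rationalisation is essential.

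Finally, naturality is inherited from Lemma \ref{lemma:functoriality}. For a $G$-equivariant $*$-homomorphism $f\colon A\to B$ the induced map $f_*$ commutes with every $i_H$ and $r_H$, hence carries $K^H_>(A)$ into $K^H_>(B)$ (these subgroups are cut out by the vanishing of the $r_{H'}$) and preserves $G$-invariants, so $f_*$ intertwines the two copies of the sequence and commutes with $\pi$; the identical argument with the K-theory boundary map $\partial$ in place of $f_*$ works by Lemma \ref{lemma:functoriality}(c), and the chosen section $K^G_>(A)_\Q\hookrightarrow K^G(A)_\Q$ is evidently natural. I expect the only mild obstacle to be bookkeeping, namely verifying that $\pi$ really is the quotient map under the isomorphism of Lemma \ref{lemma:K>alt} — so that $\ker\pi$ is the image of the first map and not something slightly larger — but this is essentially the content of the proof of that lemma and introduces no new idea.
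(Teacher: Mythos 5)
Your proposal is correct and follows essentially the same route as the paper: the short exact sequence is read off from Lemma \ref{lemma:K>alt}, the splitting is given by the inclusion $K^G_>(A)_\Q\hookrightarrow K^G(A)_\Q$ (equivalently, on the left, by the $r_H$'s followed by the projections onto the $K^H_>$-groups), and naturality is inherited from Lemma \ref{lemma:functoriality}. The one place where you go beyond the paper is the injectivity of $\bigoplus_{H\subsetneq G} i_H$, which the paper only asserts when it says the $i_H$-maps identify the direct sum with $\sum_{H\subsetneq G} i_H\bigl(K^H_>(A)^G_\Q\bigr)$; supplying an argument here is worthwhile.

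That argument, however, needs a small repair. Lemma \ref{lemma:rest-ind} gives, for $x\in K^{H}_>(A)^G_\Q$,
\[
r_{H'}\circ i_{H}(x)\;=\;[G:HH']\cdot i_{H\cap H'}\circ r_{H\cap H'}(x).
\]
This vanishes when $H\not\subseteq H'$, since then $H\cap H'\subsetneq H$ and $r_{H\cap H'}$ kills $K^H_>(A)$; but when $H\subsetneq H'$ it equals $[G:H']\,i_H(x)$ viewed in $K^{H'}(A)^G_\Q$, which is not zero in general (for $A=\C$ this is the restriction of an induced character of an abelian group to an intermediate subgroup, which does not vanish). So the clean statement ``every term with $H\neq H'$ dies'' fails for nested subgroups, and applying $r_{H'}$ to $\sum_H i_H(x_H)=0$ only yields $[G:H']\bigl(x_{H'}+\sum_{H\subsetneq H'} i_H(x_H)\bigr)=0$. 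Injectivity still follows: either induct up the lattice of subgroups (for minimal $H'$ there are no terms with $H\subsetneq H'$, so $x_{H'}=0$, and one proceeds upward), or compose further with the projection onto $K^{H'}_>(A)_\Q$, under which the terms $i_H(x_H)$ with $H\subsetneq H'$ die by Lemma \ref{lemma:K>alt} applied to $H'$ in place of $G$. With this adjustment your proof is complete and matches the paper's.
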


\begin{proof}
Lemma \ref{lemma:K>alt} gives the short exact sequence
\[
    0 
    \longrightarrow
    \sum_{H \subsetneq G} i_H(K^H_>(A)^G_\Q)
    \longrightarrow
    K^G(A)_\Q 
    \longrightarrow
    K^G_>(A)_\Q 
    \to 
    0. 
\]
We obtain \eqref{diag:sesK>} by noting that the $i_H$-maps give an isomorphism between the direct sum $\bigoplus_{H \subsetneq G} K^H_>(A)^G_\Q$ and $\sum_{H \subsetneq G} i_H(K^H_>(A)^G_\Q)$. The splitting can be described in two ways. A right split is given by the inclusion $K^G_>(A) \to K^G(A)$. A left split is given by the restriction maps $r_H$, followed by the projection map onto each $K^H_>(A)$. Naturality follows from the naturality of $i_H$ and $r_H$, proved in Lemma \ref{lemma:functoriality}.
\end{proof}

Since the $K_>$-groups are no easier to compute than the $K_G$-groups, this is not exactly a useful result for calculating $K^G(A)$. However, it is useful in our proof of the decomposition theorem because it allows us to isolate the component of $K^G(A)$ that depends on the subgroups of $G$ and the ``free'' component that doesn't depend on any of these subgroups.

\section{Proof of the Decomposition Theorem}\label{sec:decompproof}

Before beginning the proof, we restate the theorem.

\begin{theorem}
Let $G$ be a finite group, $X$  a finite $G$-CW-complex, and $P$ a $G$-equivariant twist on $X$. Then, there is an isomorphism
\begin{equation}\label{eqn:decompmap}
    K_G(X, P)_\Q 
    \xrightarrow{\cong} CSC_G(X,P)\subseteq
    \Bigl[ \bigoplus_{g \in G} K_{\cg g}(X^g, P_g)_\Q \Bigr]^G
\end{equation}
onto the subspace $CSC_G(X,P)$ defined by the following relation:
\begin{center}
    If $g,h \in G$ and $\cg h \subseteq \cg g$, then the $g$-summand and the $h$-summand map to the same element in $K_{\cg h}(X^g, P_g)_\Q$.
\end{center}
\end{theorem}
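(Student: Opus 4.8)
The plan is to follow the standard CW-induction strategy announced after the theorem statement: first verify the theorem for homogeneous spaces $G/H$ (the orbit type case), then bootstrap to arbitrary finite $G$-CW-complexes by induction on cells using Mayer-Vietoris. For the base case $X = G/H$, I would use the Morita-type reduction: $K_G(G/H, P) \cong K_H(*, P|_{*})$, and a twist on a point restricted to $H$ is classified by an element of $H^3(H;\Z) \cong H^2(H;S^1)$, i.e. comes from a group $2$-cocycle $\alpha$ on $H$. This is precisely the setting of Adem--Ruan, so one can invoke their decomposition \cite{AdemRuan:twisted}*{Theorem 7.4} together with the Fourier-transform comparison sketched in Subsection \ref{subsec:AdemRuan} to identify their decomposition with the cyclic-subgroup-compatible subspace $CSC$. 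The fixed-point sets $(G/H)^g$ are themselves disjoint unions of homogeneous spaces for centralisers, so everything reduces to representation-theoretic bookkeeping with twisted characters; the main content here is checking that the naturality relation defining $CSC$ matches the relations Adem--Ruan impose, which is essentially the inverse-discrete-Fourier-transform argument already outlined.

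For the inductive step, attach a $G$-cell: write $X = Y \cup_{G/H \times S^{n-1}} (G/H \times D^n)$, so that there is a $G$-pushout and an associated Mayer-Vietoris sequence in twisted equivariant K-theory relating $K_G(X,P)$, $K_G(Y, P|_Y)$, $K_G(G/H \times D^n, P|)$ and $K_G(G/H \times S^{n-1}, P|)$. The decomposition map \eqref{eqn:decompmap} is natural for the restriction maps to closed $G$-subspaces and for boundary maps (this is where Lemma \ref{lemma:functoriality}(c) and the naturality in Theorem \ref{sestheorem} enter, applied with the cyclic groups $\cg g$ playing the role of the abelian group $G$ there), so we get a map of Mayer-Vietoris sequences from the $K_G(-)_\Q$ sequence to the corresponding sequence of $CSC$-groups. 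By the five lemma it suffices to show that the sequence formed by the $CSC_G(-,P)_\Q$ groups is itself exact. Granting exactness, the base case gives the result for $G/H \times D^n$ and $G/H \times S^{n-1}$ (both are $G$-homotopy equivalent, respectively, to $G/H$ and to a space built from lower-dimensional homogeneous cells, handled by a secondary induction), the inductive hypothesis gives it for $Y$, and the five lemma promotes it to $X$.

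The main obstacle, as the authors themselves flag, is exactness of the Mayer--Vietoris sequence after imposing the cyclic subgroup compatibility condition. Unlike a direct sum of exact sequences, the subspace $CSC$ is cut out by cross-terms between the $g$- and $h$-summands whenever $\cg h \subseteq \cg g$, and one must show that a cycle in the $CSC$-sequence which becomes a boundary summand-by-summand can be lifted to a compatible family. This is exactly the role of the $K_>$-machinery of Section \ref{sec:K>}: using Theorem \ref{sestheorem} one decomposes each $K_{\cg g}(X^g, P_g)_\Q$ into its ``subgroup-independent'' part $K^{\cg g}_>$ and the contributions induced up from proper subgroups $\cg h \subsetneq \cg g$; the compatibility condition then says precisely that the induced-up pieces are governed by the lower summands, so that $CSC$ becomes, up to the limit description in the remark, a direct sum over conjugacy classes of the $K_>$-parts. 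Lemma \ref{lemma:liftto>} (lifting elements of $K_>$ through maps of $C^*$-algebras) is what makes the induced decomposition of the Mayer--Vietoris sequence termwise exact, since it lets us lift the top piece independently of the lower-subgroup pieces, which are controlled inductively. I would therefore: (i) prove that $CSC_G(X,P)_\Q \cong \bigoplus_{[g]} \bigl( K^{\cg g}_>(X^g, P_g)_\Q \bigr)^{W}$ for the appropriate Weyl-type group $W$ acting, naturally in $X$; (ii) observe that this identification is compatible with restriction and boundary maps via Lemma \ref{lemma:functoriality} and Theorem \ref{sestheorem}; (iii) deduce exactness of the $CSC$-Mayer--Vietoris sequence from exactness of each $K_>$-summand's sequence, which follows from the ordinary Mayer--Vietoris sequence together with Lemma \ref{lemma:liftto>}; and (iv) run the five-lemma induction. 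Step (i)--(iii) is where the real work lies; the rest is formal.
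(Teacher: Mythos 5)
Your proposal follows essentially the same route as the paper: the base case $X=G/H$ via comparison with Adem--Ruan, induction over $G$-cells with Mayer--Vietoris, and the $K_>$-machinery (Theorem \ref{sestheorem} together with Lemma \ref{lemma:liftto>}) to show that a cyclic-subgroup-compatible kernel element admits a cyclic-subgroup-compatible lift. The only organizational difference is that where you propose to first establish the global natural isomorphism $CSC_G(X,P)_\Q\cong\bigoplus_{[g]}\bigl(K^>_{\cg g}(X^g,P_g)_\Q\bigr)^W$ (your step (i), which is more than is strictly needed and would require some care with the Weyl-type identifications of summands indexed by different generators of the same cyclic subgroup), the paper instead constructs the compatible lift directly by induction over the poset of cyclic subgroups $\cg h\subseteq\cg g$, splitting off the $K^>_{\cg g}$-component at each stage.
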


\begin{proof}

 First consider the homogeneous space $X = G/H$. This is discrete, so every $G$-equivariant twist comes from a group cocycle. The result is then true by comparison with the isomorphism of Adem and Ruan \cite{AdemRuan:twisted}; we compared our decomposition with Adem and Ruan's in \textsection \ref{subsec:AdemRuan}.

 Now, we wish to use induction on the $G$-cells of $X$ via a Mayer-Vietoris argument. For this, it is required that the functor $CSC_G$ satisfies the Mayer-Vietoris property. The direct sum in \eqref{eqn:decompmap} satisfies this property; we need to ensure that restricting to the cyclic subgroup compatible elements preserves it. Taking subspaces is left exact, so we already have that images are contained in kernels. It is then sufficient to show that if a cyclic subgroup compatible element lies in the kernel of a map in the Mayer-Vietoris sequence, then it can be lifted to an element that is also cyclic subgroup compatible. 

 Let $\bigl[ \bigoplus_{g \in G} K_{\cg g}(X^g, P_g)_\Q \bigr]^G \to \bigl[ \bigoplus_{g \in G} K_{\cg g}(Y^g, Q_g)_\Q \bigr]^G$ be a map in the Mayer-Vietoris sequence. It is the restriction of a direct sum of maps in the Mayer-Vietoris sequence for each $K_{\cg g}$. Let $y = (y_g)_{g \in G}$ be a cyclic subgroup compatible element in the kernel of the next map in the Mayer-Vietoris sequence. We construct a lift of this element to the cyclic subgroup compatible elements.

 Consider the partial order $<$ on $G$ where $h < g$ when $\cg h \subsetneq \cg g$. We construct a lift by induction on this poset. The smallest element in this poset is the identity, and $e < g$ for every $g \in G$. Start by choosing a lift $x_e$ of $y_e$ under the map $K(X,P)^G_\Q \to K(Y,Q)^G_\Q$. Now consider $g \in G$. Assume that for every $h < g$ we have compatible lifts $x_h$ of $y_h$ - compatible meaning that for every $h' < h$ the lifts satisfy the correct compatibility relation between the $h'$ and $h$-summands. Note that since the elements are $G$-invariant, they are also $\cg g$-invariant. By Theorem \ref{sestheorem} we have the following commutative diagram:
\[
    \begin{tikzcd}[column sep={2em},row sep=1em]
        0 \arrow[r] 
        & \bigoplus\limits_{h<g} K^>_{\cg h}(X^g, P_g)_\Q^{\cg g} \arrow[r] \arrow[d] 
        & K_{\cg g}(X^g, P_g)_\Q \arrow[r] \arrow[d] 
        & K^{>}_{\cg g}(X^g, P_g)_\Q \arrow[r] \arrow[d] 
        & 0 \\
        0 \arrow[r] 
        & \bigoplus\limits_{h<g} K^>_{\cg h}(Y^g, Q_g)_\Q^{\cg g} \arrow[r] 
        & K_{\cg g}(Y^g, Q_g)_\Q \arrow[r] 
        & K^{>}_{\cg g}(Y^g, Q_g)_\Q \arrow[r] 
        & 0 
    \end{tikzcd}
\]
The $K^>$-groups were defined in the previous section (we now write the $>$ as a superscript because we're using K-theory of spaces instead of algebras). We have been given elements in the bottom row and by assumption we have a lift $(x_h)_{h<g}$ of $(y_h)_{h<g}$ on the left-hand side of the diagram. Let $y'$ be the projection of $y$ onto $K^{>}_{\cg g}(Y^g, Q_g)_\Q$. By Lemma \ref{lemma:liftto>} there exists a lift $x' \in K^{>}_{\cg g}(X^g, P_g)_\Q$ of $y'$. Then $(x_h)_{h<g}$ and $x'$ together form an element in $K_{\cg g}(X^g, P_g)_\Q$ that is a lift of $y$ and satisfies cyclic subgroup compatibility. Thus, by induction, we can always construct the necessary lift, and the proof is complete.
\end{proof}

As a corollary, we can also decompose the twisted equivariant K-theory of a $G$-equivariant fiber bundle $E \to X$.  Note that this is precisely the situation encountered in T-duality, where the twisted K-theory of the total space of a $U(1)$-bundle or $T^n$-bundle has to be analyzed. We decompose the rational equivariant K-theory of $E$ into the twisted cyclic group equivariant K-theory of $E|_{X^g}$, the restriction of $E$ to the fixed point spaces of $X$. The maps are induced by the inclusions $E|_{X^g} \to E$, and the resulting map
\[
    K_G(E,P)_\Q 
    \to 
    \bigoplus_{g \in G} K_{\cg g}(E|_{X^g}, P|_{E|_{X^g}})_\Q
\]
has image the $G$-invariant, cyclic subgroup compatible elements. 

\begin{corollary}
Let $E \to X$ be a $G$-equivariant fiber bundle with fiber and base both $G$-CW-complexes. Let $P$ be a $G$-equivariant twist on $E$. Then, there is an isomorphism
\[
    K_G(E,P)_\Q 
    \to \widetilde{CSC}_G(E,P) 
    \subseteq 
    \Bigl[ \bigoplus_{g \in G} K_{\cg g}(E|_{X^g}, P|_{E|_{X^g}})_\Q \Bigr]^G
\]
onto the subspace $\widetilde{CSC}_G(E,P)$ of cyclic subgroup compatible elements.
\end{corollary}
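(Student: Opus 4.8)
The plan is to reduce the corollary to the decomposition theorem by exhibiting $E$ as a $G$-CW-complex built functorially from the $G$-CW-structure of $X$, so that the fixed-point sets of $E$ and their twisted $\langle g\rangle$-equivariant K-theory are controlled by those of $X$. First I would observe that the total space $E$ of a $G$-equivariant fiber bundle over a finite $G$-CW-complex $X$, with fiber a finite $G$-CW-complex $F$, is itself (equivariantly homotopy equivalent to, or can be given the structure of) a finite $G$-CW-complex: over each $G$-cell $G/H \times D^n$ of $X$ the bundle is $G \times_H (D^n \times F_H)$ for some $H$-CW-complex $F_H$, and one assembles these pieces. Once $E$ is a finite $G$-CW-complex and $P$ a $G$-equivariant twist on it, the decomposition theorem applies verbatim to the pair $(E,P)$, giving an isomorphism
\[
    K_G(E,P)_\Q \xrightarrow{\ \cong\ } CSC_G(E,P) \subseteq \Bigl[ \bigoplus_{g\in G} K_{\langle g\rangle}(E^g, P_g)_\Q \Bigr]^G.
\]

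The second step is to identify $E^g$ with $E|_{X^g}$. Since $E \to X$ is $G$-equivariant, $g$ acts on $E$ covering its action on $X$; a point of $E$ is fixed by $g$ only if its image in $X$ is fixed by $g$, so $E^g \subseteq E|_{X^g}$. Conversely $g$ acts on the restricted bundle $E|_{X^g} \to X^g$ fiberwise over the trivial-action base $X^g$; I need that this fiberwise $g$-action is trivial, i.e. that $E^g = E|_{X^g}$. This is where I should be a little careful: it requires the bundle (or at least the corollary's hypotheses) to be set up so that the structure group action commutes appropriately — equivalently, that the $\langle g\rangle$-action on $E|_{X^g}$ is the "pullback" action making $E|_{X^g}$ literally $\langle g\rangle$-fixed. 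If instead $g$ acts nontrivially on the fibers over $X^g$, then $E^g$ is a genuine sub-fiber-bundle and the statement would need to be phrased with $E^g$ rather than $E|_{X^g}$; I would either add the hypothesis that the fiber action is trivial over $X^g$, or simply note that $\widetilde{CSC}_G(E,P)$ is by definition the pullback of $CSC_G(E,P)$ along the (split, since $\Q$-coefficients) restriction $\bigoplus_g K_{\langle g\rangle}(E|_{X^g},P_g)_\Q \twoheadrightarrow \bigoplus_g K_{\langle g\rangle}(E^g,P_g)_\Q$ onto the relevant summands. In the clean case $E^g = E|_{X^g}$, the subspace $\widetilde{CSC}_G(E,P)$ is literally $CSC_G(E,P)$ and there is nothing more to do.

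Assembling: the decomposition map for $(E,P)$ is by construction the direct sum of restrictions $K_G(E,P)_\Q \to K_{\langle g\rangle}(E^g,P_g)_\Q$ along $E^g \hookrightarrow E$; under the identification $E^g = E|_{X^g}$ these are exactly the maps induced by $E|_{X^g}\hookrightarrow E$ named in the corollary, and the target $CSC_G(E,P)$ is exactly the $G$-invariant, cyclic-subgroup-compatible subspace $\widetilde{CSC}_G(E,P)$. So the isomorphism follows immediately from the theorem applied to $E$. The main obstacle, and the only nontrivial point, is the first step: verifying that a $G$-equivariant fiber bundle over a finite $G$-CW base with finite $G$-CW fiber admits a finite $G$-CW-structure (or at least a $G$-homotopy equivalence to one) with $P$ a genuine equivariant twist on it — this is a standard but slightly technical equivariant bundle-theory fact, proved by clutching over the cells of $X$ and using that the transition functions can be taken $G$-equivariantly; a secondary but easy point is the identification $E^g = E|_{X^g}$ discussed above. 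Everything else is formal.
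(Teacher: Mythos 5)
Your reduction hinges on the identification $E^g = E|_{X^g}$, and that is exactly where the argument breaks: the inclusion $E^g \subseteq E|_{X^g}$ is in general strict, because $g$ can act nontrivially (even freely) on the fibers over $X^g$. For instance, let $G=\Z_2$ act trivially on $X=S^1$ and on $E = S^1\times S^1 \to S^1$ by a half-rotation of the fiber circle; then $E|_{X^g}=E$ while $E^g=\emptyset$. Your two proposed fixes both change the statement: adding the hypothesis that the fiber action over $X^g$ is trivial excludes precisely the $U(1)$- and $T^n$-bundle situations from T-duality that motivate the corollary, and replacing $E|_{X^g}$ by $E^g$ merely restates the main theorem for the $G$-space $E$. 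The fallback of ``pulling back $CSC_G(E,P)$ along the restriction to the $E^g$-summands'' is not justified either: one would have to show that this restriction identifies the two compatible subspaces, which is the actual content of the corollary, not a formality about split surjections.

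The paper's proof keeps both collections of spaces in play and never identifies them. It applies the decomposition theorem twice: once to $E$ as a $G$-space, giving $K_G(E,P)_\Q \cong CSC_G(E,P)$ inside $\bigl[\bigoplus_{g} K_{\langle g\rangle}(E^g, P|_{E^g})_\Q\bigr]^G$ (with the honest fixed sets $E^g$ of the total space), and once to each $E|_{X^g}$ as a $\langle g\rangle$-space, decomposing $K_{\langle g\rangle}(E|_{X^g}, P|_{E|_{X^g}})_\Q$ over the fixed sets $(E|_{X^g})^{g^i}$. Both decompositions land in a common refinement indexed by pairs $(g,i)$ with $0\le i<|g|$, where cyclic subgroup compatibility forces the $(h,i)$- and $(k,j)$-summands to agree whenever $h^i=k^j$; the map sending the $g$-summand of $\bigoplus_g K_{\langle g\rangle}(E^g,\cdot)$ diagonally to all $(h,i)$ with $h^i=g$ identifies the compatible elements on the two sides, and the corollary follows from the resulting triangle of isomorphisms between $K_G(E,P)_\Q$, $CSC_G(E,P)$, and $\widetilde{CSC}_G(E,P)$. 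Your preliminary observation that $E$ admits a finite $G$-CW structure is indeed needed (the theorem must be applied to $E$ itself), so that part of your write-up is fine; the missing idea is the second application of the theorem to the spaces $E|_{X^g}$ and the comparison in the common refinement.
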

\begin{proof}
Consider the following commutative diagram:
\begin{equation}\label{diag}
\begin{tikzcd}
    K_G(E, P)_\Q \arrow[r] \arrow[d] 
        & \displaystyle\bigoplus_{g \in G} K_{\cg g}(E|_{X^g}, P|_{E|_{X^g}})_\Q \arrow[d] \\
    \displaystyle\bigoplus_{g \in G} K_{\cg g}(E^g, P|_{E^g})_\Q \arrow[r] 
        & \displaystyle\bigoplus_{g \in G} \displaystyle\bigoplus_{i=0}^{|g|-1} K_{\cg{g^i}}(E^{g^i}, P|_{E^{g^{i}}})_\Q
\end{tikzcd}
\end{equation}
The upper horizontal map is as described in the theorem. The vertical maps are obtained by applying the decomposition theorem to $E$ and each $E|_{X^g}$ respectively. These are isomorphisms onto their images. 

The bottom right group is a direct sum over tuples $(g,i)$ with $g \in G$ and $0 \leq i < |g|$. Cyclic subgroup compatibility implies that the $(h,i)$- and $(k,j)$-summands must agree whenever $h^i = k^j$. 

The lower horizontal map is obtained by sending the element in the $g$-summand of the left-hand side to each $(h,i)$-summand on the right-hand side with $g = h^i$. This induces an isomorphism between the cyclic subgroup compatible elements of each side. Diagram \eqref{diag} therefore induces a diagram of isomorphisms:
\[
\begin{tikzcd}[column sep = 2pt]
    K_G(E,P)_\Q \arrow[rr, "\cong"] \arrow[rd, "\cong", swap]
        && \widetilde{CSC}_G(E,P)_\Q \arrow[ld, "\cong"] \\
    & CSC_G(E,P)_\Q &
\end{tikzcd}
\]
This completes the proof.
\end{proof}

\begin{bibdiv}
\begin{biblist}*{labels={alphabetic}}

\bib{AdemRuan:twisted}{article}{
   author={Adem, Alejandro},
   author={Ruan, Yongbin},
   title={Twisted orbifold $K$-theory},
   journal={Comm. Math. Phys.},
   volume={237},
   date={2003},
   number={3},
   pages={533--556},
   issn={0010-3616},
   review={\MR{1993337}},
   doi={10.1007/s00220-003-0849-x},
}
\bib{AtiyahSegal:decomp}{article}{
   author={Atiyah, Michael},
   author={Segal, Graeme},
   title={On equivariant Euler characteristics},
   journal={J. Geom. Phys.},
   volume={6},
   date={1989},
   number={4},
   pages={671--677},
   issn={0393-0440},
   review={\MR{1076708}},
   doi={10.1016/0393-0440(89)90032-6},
}

\bib{AtiyahSegal:TKT}{article}{
   author={Atiyah, Michael},
   author={Segal, Graeme},
   title={Twisted $K$-theory},
   journal={Ukr. Mat. Visn.},
   volume={1},
   date={2004},
   number={3},
   pages={287--330},
   issn={1810-3200},
   translation={
      journal={Ukr. Math. Bull.},
      volume={1},
      date={2004},
      number={3},
      pages={291--334},
      issn={1812-3309},
   },
   review={\MR{2172633}},
}

\bib{BEJU:universaltwist}{article}{
   author={B\'{a}rcenas, No\'{e}},
   author={Espinoza, Jes\'{u}s},
   author={Joachim, Michael},
   author={Uribe, Bernardo},
   title={Universal twist in equivariant $K$-theory for proper and discrete
   actions},
   journal={Proc. Lond. Math. Soc. (3)},
   volume={108},
   date={2014},
   number={5},
   pages={1313--1350},
   issn={0024-6115},
   review={\MR{3214681}},
   doi={10.1112/plms/pdt061},
}

\bib{BEM:Tduality}{article}{
   author={Bouwknegt, Peter},
   author={Evslin, Jarah},
   author={Mathai, Varghese},
   title={$T$-duality: topology change from $H$-flux},
   journal={Comm. Math. Phys.},
   volume={249},
   date={2004},
   number={2},
   pages={383--415},
   issn={0010-3616},
   review={\MR{2080959}},
   doi={10.1007/s00220-004-1115-6},
}
\bib{BunkeSchick}{article}{
   author={Bunke, Ulrich},
   author={Schick, Thomas},
   title={On the topology of $T$-duality},
   journal={Rev. Math. Phys.},
   volume={17},
   date={2005},
   number={1},
   pages={77--112},
   issn={0129-055X},
   review={\MR{2130624}},
   doi={10.1142/S0129055X05002315},
}
	
\bib{DonovanKaroubi}{article}{
   author={Donovan, P.},
   author={Karoubi, M.},
   title={Graded Brauer groups and $K$-theory with local coefficients},
   journal={Inst. Hautes \'{E}tudes Sci. Publ. Math.},
   number={38},
   date={1970},
   pages={5--25},
   issn={0073-8301},
   review={\MR{282363}},
}

\bib{FHT1}{article}{
   author={Freed, Daniel S.},
   author={Hopkins, Michael J.},
   author={Teleman, Constantin},
   title={Loop groups and twisted $K$-theory I},
   journal={J. Topol.},
   volume={4},
   date={2011},
   number={4},
   pages={737--798},
   issn={1753-8416},
   review={\MR{2860342}},
   doi={10.1112/jtopol/jtr019},
}

\bib{Karoubi:oldandnew}{article}{
   author={Karoubi, Max},
   title={Twisted $K$-theory---old and new},
   conference={
      title={$K$-theory and noncommutative geometry},
   },
   book={
      series={EMS Ser. Congr. Rep.},
      publisher={Eur. Math. Soc., Z\"{u}rich},
   },
   date={2008},
   pages={117--149},
   review={\MR{2513335}},
   doi={10.4171/060-1/5},
}

\bib{Meinrenken}{article}{
   author={Meinrenken, Eckhard},
   title={On the quantization of conjugacy classes},
   journal={Enseign. Math. (2)},
   volume={55},
   date={2009},
   number={1-2},
   pages={33--75},
   issn={0013-8584},
   review={\MR{2541501}},
   doi={10.4171/lem/55-1-2},
}

\bib{Phillips}{article}{
   author={Phillips, N. Christopher},
   title={The Atiyah-Segal completion theorem for $C^*$-algebras},
   journal={$K$-Theory},
   volume={3},
   date={1989},
   number={5},
   pages={479--504},
   issn={0920-3036},
   review={\MR{1050491}},
   doi={10.1007/BF00534138},
}

\bib{Rosenberg:TKT}{article}{
   author={Rosenberg, Jonathan},
   title={Continuous-trace algebras from the bundle theoretic point of view},
   journal={J. Austral. Math. Soc. Ser. A},
   volume={47},
   date={1989},
   number={3},
   pages={368--381},
   issn={0263-6115},
   review={\MR{1018964}},
}

\bib{Witten:DbranesKtheory}{article}{
   author={Witten, Edward},
   title={D-branes and $K$-theory},
   journal={J. High Energy Phys.},
   date={1998},
   number={12},
   pages={Paper 19, 41},
   issn={1126-6708},
   review={\MR{1674715}},
   doi={10.1088/1126-6708/1998/12/019},
}

\end{biblist}
\end{bibdiv}

\end{document}